\def\input@path{{scheme-listings/}}
\newif\ifdraft
\newif\ifarxiv
\newlength\titlebox \setlength\titlebox{2.375in}
\newcommand*{\SavedLstInline}{}
\LetLtxMacro\SavedLstInline\lstinline
\DeclareRobustCommand*{\lstinline}{
  \ifmmode
    \let\SavedBGroup\bgroup
    \def\bgroup{
      \let\bgroup\SavedBGroup
      \hbox\bgroup
    }
  \fi
  \SavedLstInline
}
\newcommand{\code}[1]{\texttt{\lstinline[mathescape,classoffset=1,keywordstyle=\color{black},basicstyle=\color{black},classoffset=0,keywordstyle=\color{black}]{#1}}}
\definecolor{commentcolor}{rgb}{0.133,0.545,0.133}
\crefname{listing}{Algorithm}{Algorithms}
\Crefname{listing}{Algorithm}{Algorithms}
\theoremstyle{definition}
\newcommand{\mynewtheorem}[3]{
\ifthenelse{\equal{#1}{theorem}}{
    \newtheorem{my#1}{#2}
}{
    \newtheorem{my#1}[mytheorem]{#2}
    \ifthenelse{\equal{#3}{}}{
        \crefname{my#1}{#2}{#2s}
    }{
        \crefname{my#1}{#2}{#3}
    }
}
}
\newenvironment{theorem}
  {\pushQED{\qed}\mytheorem}
  {\popQED\endmytheorem}
\newenvironment{remark}
  {\pushQED{\qed}\myremark}
  {\popQED\endmyremark}
\newenvironment{example}
  {\pushQED{\qed}\myexample}
  {\popQED\endmyexample}
\definecolor{darkred}{rgb}{.5,0,0}
\definecolor{darkgreen}{rgb}{0,.5,0}
\definecolor{darkblue}{rgb}{0,0,.5}
\definecolor{darkorange}{rgb}{.8,.4,0}
\newcommand{\todo}[1]{\textcolor{darkorange}{(\emph{TODO: #1})}}
\newcommand{\mh}[1]{\textcolor{darkblue}{(\emph{MH: #1})}}
\newcommand{\comment}[1]{\textcolor{gray}{(\emph{#1})}}
\newcommand{\warning}[1]{\textcolor{red}{(\emph{WARNING: #1})}}
\newcommand{\quest}[1]{\textcolor{darkgreen}{(\emph{Q: #1})}}
\newcommand{\XXX}{\textcolor{red}{\textbf{XXX}}}
\newcommand{\todo}[1]{}
\newcommand{\mh}[1]{}
\newcommand{\comment}[1]{}
\newcommand{\warning}[1]{}
\newcommand{\quest}[1]{}
\newcommand{\XXX}{}
\newcounter{alphoversetcount}
\newcommand{\resetalph}{\setcounter{alphoversetcount}{0}}
\newenvironment{alphalign*}{
\csname align*\endcsname\resetalph{}
}{
\csname endalign*\endcsname\resetalph{}
}
\def\th@plain{
  \thm@notefont{}
  \itshape
}
\def\th@definition{
  \thm@notefont{}
  \normalfont
}
\newcommand{\ie}{\emph{i.e.}, }
\newcommand{\eg}{\emph{e.g.}, }
\DeclareMathOperator*{\argmin}{argmin}
\newcommand{\Naturals}{{\mathbb N}_1}
\newcommand{\Nonnegints}{{\mathbb N}_0}
\newcommand{\Reals}{{\mathbb R}}
\newcommand{\innerprod}[1]{\langle #1 \rangle}
\newcommand{\ceil}[1]{\left\lceil#1\right\rceil}
\newcommand{\eps}{\varepsilon}
\newcommand{\xleft}{x^{\text{left}}}
\newcommand{\xright}{x^{\text{right}}}
\newcommand{\yleft}{y^{\text{left}}}
\newcommand{\yright}{y^{\text{right}}}
\newcommand{\xlow}{x^{\text{low}}}
\newcommand{\ylow}{y^{\text{low}}}
\newcommand{\plow}{p^{\text{low}}}
\newcommand{\xhigh}{x^{\text{high}}}
\newcommand{\yhigh}{y^{\text{high}}}
\newcommand{\phigh}{p^{\text{high}}}
\newcommand{\fhigh}{f^{\text{high}}}
\newcommand{\ytol}{y^{\text{tol}}}
\newcommand{\gradalg}{$\Delta$-Bisection}
\newcommand{\nogradalg}{$\Delta$-Secant}
\title{Line Search for Convex Minimization}
\author{Laurent Orseau, Marcus Hutter \\
Google DeepMind \\
\small \{lorseau,mhutter\}@google.com}
\ifdraft \date{January 2023} \else \date{} \fi
\begin{document}

\maketitle

\begin{abstract}
    Golden-section search and bisection search are the two main principled algorithms for 1d minimization of quasiconvex (unimodal) functions.
    The first one only uses function queries, while the second one
    also uses gradient queries.
    Other algorithms exist under much stronger assumptions, such as Newton's method for twice-differentiable, strongly convex functions with Lipschitz gradients.
    However, to the best of our knowledge, there is no principled exact line search algorithm for general \emph{convex} functions --- including piecewise-linear and max-compositions of convex functions --- that takes advantage of convexity.
    We propose two such algorithms: \gradalg{} is a variant of bisection search that uses (sub)gradient information and convexity to speed up convergence,
    while \nogradalg{} is a variant of golden-section search and uses only function queries.
    
    Both algorithms are based on a refined definition of the \emph{optimality region} $\Delta$ containing the minimum point, for general convex functions.
    
    While bisection search reduces the $x$ interval by a factor 2 at every iteration,
    \gradalg{} reduces the (sometimes much) smaller $x^*$-gap $\Delta^x$ 
    (the $x$ coordinates of $\Delta$)
    by \emph{at least} a factor 2 at every iteration.
    Similarly, \nogradalg{} also reduces the $x^*$-gap by at least a factor 2 every second function query.
    
    Moreover, and possibly more importantly, the $y^*$-gap $\Delta^y$ (the $y$ coordinates of $\Delta$) also provides a refined stopping criterion, which can also be used with other algorithms.
    
    Experiments on a few convex functions confirm that our algorithms are always faster than their quasiconvex counterparts, often by more than a factor 2.
    
    We further design a \emph{quasi-exact} line search algorithm based on \nogradalg{}.
    It can be used with gradient descent as a replacement for backtracking line search, for which some parameters can be finicky to tune --- and we provide examples to this effect, on strongly-convex and smooth functions.
    We provide convergence guarantees, and confirm the efficiency of quasi-exact line search on a few single- and multivariate
    convex functions.\footnote{
    One implementation of \nogradalg{} can be found at \url{https://github.com/deepmind/levintreesearch_cm/blob/main/lts-cm/delta-secant.rkt}}
\end{abstract}

\section{Introduction}

We consider the problem of exact function minimization in one dimension for general convex functions that are expensive to evaluate~\citep{boyd2004convex}.
This can occur for example when looking for a step size for gradient descent or Frank-Wolfe~\citep{frank1956quadratic} for logistic regression with millions of parameters.

While a backtracking line search is often used for this  problem~\citep{armijo1966minimization,boyd2004convex}, such algorithms are quite sensitive to their parameters, and there is no universal good choice.
In the past, exact line searches have been used, but they were deemed
too computationally costly.
In this work we propose a \emph{quasi}-exact line search for convex functions, which is both computationally efficient and has tight guarantees while being easy to tune --- that is, the default value of its single parameter appears to be a universally good choice. 

When gradient information is available and computationally cheap enough,
a classic algorithm for minimizing a 1-d \emph{quasi-convex} function is bisection search.
It ensures that the $x$ interval is halved at each iteration of the search.
We improve bisection search for 1-d \emph{convex} functions by taking advantage of convexity to refine the interval in which the optimum lies, which we call the optimality region $\Delta$.
We call this algorithm \gradalg{} and we show that it is provably faster than bisection search:
the size of the optimality interval $|\Delta^x|$ on the $x$ axis (which we call the $x^*$-gap) reduces by more than a factor 2 at each iteration, and often significantly more.
Moreover, for general convex functions we can use the optimality interval $|\Delta^y|$ (called the $y^*$-gap) to design a stopping criterion.
The $y^*$-gap is almost always bounded for convex functions, while it is almost always infinite for general quasi-convex functions.

When gradient information is not available or is too expensive to compute,
another classic algorithm to minimize 1-d quasi-convex functions is golden-section search (GSS).
It reduces the $x$ interval by a factor $(1-\sqrt{5})/2\approx 1.62$ at each iteration.
Again, we take advantage of convexity to refine the optimality region $\Delta(P)$, for a set $P$ of function evaluations.
We design an algorithm called \nogradalg{} that uses this information.
It is guaranteed to reduce the $x^*$-gap by a factor at least 2 every second iteration.
Note that the $x^*$-gap can be significantly smaller than the $x$ interval, 
so the guarantees of GSS and \nogradalg{} are not straightforwardly comparable.

Experiments on a few convex functions confirm that our algorithms are 
significantly faster than their quasi-convex counterparts.

Finally, we use \nogradalg{} to design a \emph{quasi-exact} line search algorithm to be combined with gradient descent-like algorithms 
for minimization of multivariate convex functions.
We prove a tight guarantee for strongly convex and smooth functions,
which is only a factor 2 worse than the guarantee for exact line search,
while being significantly faster in practice.
We validate on some experiments that our quasi-exact line search 
is competitive with a well-tuned backtracking line search, while being more robust and does not require to precisely tune a parameter.
Moreover, we demonstrate that backtracking line search has failure cases
even for smooth convex functions, making it non-straightforward to tune its parameters.
\todo{"And without the need to relax the convergence guarantee to enable faster convergence in practice"}

\section{Notation and Background}\label{sec:notation}

For a point $p^i$, we write $x^i$ and $y^i$ for its $x$ and $y$ coordinates,
 that is $p^i=(x^i, y^i)$.

For two points $p^i$ and $p^j$,
and define the secant line $f^{ij}(x)= a^{ij}(x-x^i)+y^i$ 
to be the line going through $p^i$ and $p^j$,
where  $a^{ij}=\frac{y^i-y^j}{x^i-x^j}$

For four points $p^i, p^j, p^k, p^l$,
\warning{$p^{ij,kl}$ to be consistent with \cref{fig:5point}}
define $p^{ij,kl}=(x^{ij,kl}, y^{ij,kl})$ to be the intersection point of the line going through $p^i$ and $p^j$ with the line going through $p^k$ and $p^l$. It can be checked that it can be expressed as
\begin{align}\label{eq:intersection}
    x^{ij, kl} &= \frac{f^{ij}(0) - f^{kl}(0)}{a^{kl}-a^{ij}}\,, &
    y^{ij, kl} &= f^{ij}(x^{ij, kl}) = f^{kl}(x^{ij, kl})
    = \frac{a^{kl}f^{ij}(0)-a^{ij}f^{kl}(0)}{a^{kl}-a^{ij}} \,.
\end{align}
There are many ways to express this point, but we choose this one as it is 
simple and is more closely related to the numerical expression we use for stability (see \cref{sec:stability}).

Let $P$ be a set of points,
then we define $\plow(P) = \argmin_{(x,y)\in P} y$,
and we also define $ (\ylow(P), \xlow(P)) = \plow(P)$,
and similarly for $\phigh$.

We assume that the user provides a convex function $f:[\xleft,\xright]\to\Reals\cup\{\infty\}$ and the interval $[\xleft,\xright]$. 
We write $x^*\in\argmin_{x\in[\xleft,\xright]} f(x)$ for some arbitrary minimum $x^*$ of $f$ within the given bounds,
and $y^* = f(x^*)$.

Given a convex function $f:[\xleft,\xright]\to\Reals$,
we say that a set of points $P$ is $(f,\xleft,\xright)$-convex
if for all $(x,y) \in P$, $y=f(x)$,
and $\argmin_{x\in[\xleft,\xright]} f(x)\subseteq [\xlow(P),\xhigh(P)]$.
(The last condition is easy to fulfill by including the points at $\xleft$ and $\xright$ in $P$, but, practically, it may be useful to remove unnecessary points in $P$.)

We also assume that the user provides a tolerance $\ytol$ 
and that the algorithms considered later return a point $(x, y)$ 
only when it can be proven that $y-y^*\leq \ytol$.

\todo{Define gradient-based bisection search, and golden-section search.}

Recall that a function $f:\Reals^d\to\Reals$ is $m$-strongly convex if,
for all $x, x' \in\Reals^d$:
\begin{align*}
    f(x') \geq f(x) + \innerprod{x' -x, \nabla f(x)} + \frac m2 \|x' -x\|_2^2\,,
\end{align*}
and $f$ is $M$-smooth if
for all $x, x' \in\Reals^d$:
\begin{align*}
    f(x') \leq f(x) + \innerprod{x' -x, \nabla f(x)} + \frac M2 \|x' -x\|_2^2\,.
\end{align*}

\section{Algorithms for 1-d convex minimization}

We first consider the case where gradient information is used,
and define the \gradalg{} algorithm.
Thereafter we consider the case where gradient information is not used,
and define the \nogradalg{} algorithm.

\subsection{\protect\gradalg: Convex line search with gradient information}

It is well-known that 1-d convex minimization can be performed 
using the bisection method to guarantee a reduction of the $x$ interval by a factor 1/2 at each iteration. Quoting \citet{boyd2004convex}:
\begin{quote}
    The volume reduction factor is the best it can be: it is always exactly 1/2.
\end{quote}
For general quasi-exact functions, only the sign of the gradient is informative --- and provides exactly 1 bit of information.
However, more information is available for convex functions, namely, the magnitude of the gradient.
Thus, it is almost always possible to reduce the interval by a factor less than 1/2, using nothing more than gradient information and convexity, and without performing more function or gradient queries than the original bisection algorithm.
For this purpose, we propose the \gradalg{} algorithm.
To show how this is achieved, we need to consider a refinement of the interval where the optimum $x^*$ can be located.
See \cref{fig:xgap_gradient}. 

\begin{figure}[tbh!]
    \centering
    \includegraphics[width=\textwidth]{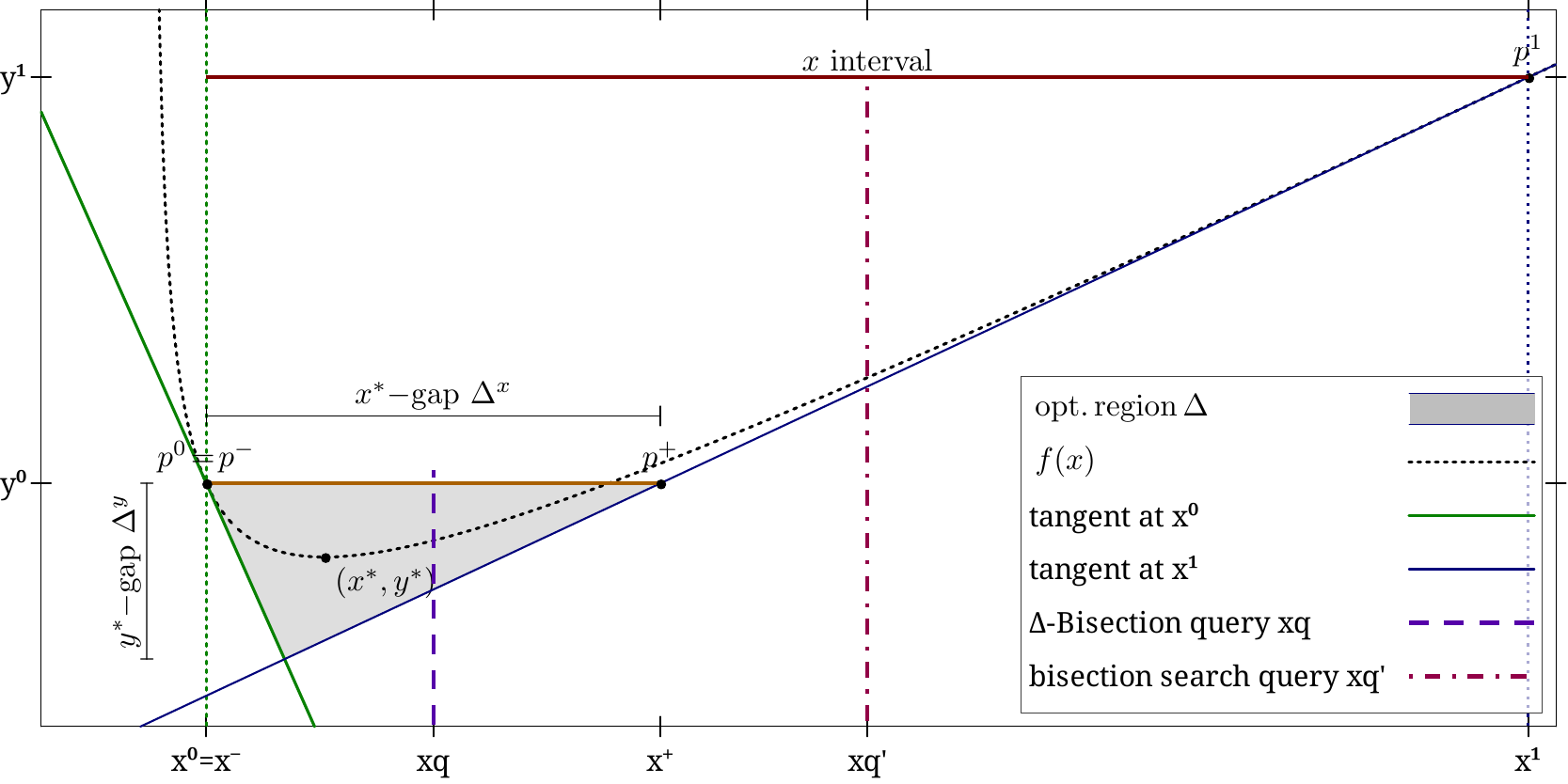}
    \caption{{\bf \gradalg:}
    An example where the $x^*$-gap
    $\Delta^x= [x^-, x^+] \approx [0.2, 1.85]$
    is significantly smaller (almost by a factor 3 here) than
    the $x$ interval
    $[x_0,x_1]=[0.2, 5]$,
    after querying the function $f$ and the gradients at $x^0$ and $x^1$.    
    $x^+$ is the $x$ value of the intersection of the tangent at $x^1$
    with the horizontal axis at $\min\{y^0, y^1\}$.
    Similarly, $x^-$ is the $x$-value of the intersection of the tangent at $x^0$
    with the horizontal axis at $\min\{y^0, y^1\}$ which, here, is $x^0$.    
    Due to convexity, we know that the minimum $x^*$ cannot be outside $[x^-, x^+]$.
    The original bisection search algorithm would query at
    the middle of $[x_0, x_1]$ at $x^{q'}=2.6$
    --- which is outside $[x^-, x^+]$ ---
    which, while reducing the $[x^0, x^1]$ interval by a factor 2, would reduce the $\Delta^x$ interval only marginally
    \mh{??}\comment{I'm not sure how to express this better}
    (using the tangent information at $x^{q'}$ --- not shown).
    By contrast, the \gradalg{} algorithm queries at $x^q$
    in the middle of $\Delta^x$, ensuring a reduction of $\Delta^x$ by at least a factor 2: once the gradient at $x^q$ becomes known,
    we will know that $x^* \notin[x^q, x^+]$ on this figure, which gives a reduction of a factor 2,
    and the tangent at $x^q$ will be used to calculate the new values of $x^-$ and $x^+$.
    }
    \label{fig:xgap_gradient}
\end{figure}

At each iteration $t$, the algorithm maintains two points, $p^0_t=(x^0_t, y^0_t)$ and $p^1_t=(x^1_t, y^1_t)$,
with $x^0_t \leq x^1_t$,
and the gradients of $f$ at these points $f'(x^0_t)$ and $f'(x^1_t)$.
Define $\xhigh_t\in \{x^0_t,x^1_t\}$
such that $\yhigh_t = f(\xhigh_t) =\max\{f(x^0_t),f(x^1_t)\}$,
and $\xlow_t \in \{x^0_t,x^1_t\}\setminus\{\xhigh_t\}$ with $\ylow_t= f(\xlow_t)$.

Define $f^a_b(x)= f(x^a_b) + (x - x^a_b)f'(x^a_b)$ to be the tangent function
of $f$ at $x^a_b$.
Then $f^0_t(\cdot)$ and $f^1_t(\cdot)$ are the two tangents at the two points
$x^0_t$ and $x^1_t$.
Then by convexity we know that 
$f(x^*) \geq f^0_t(x^*)$  and $f(x^*) \geq f^1_t(x^*)$,
and we also know that $f(x^*) \leq \ylow_t$.
We use these constraints to define the \emph{optimality region}
$\Delta_t$ at iteration $t$:
\begin{align*}
    \Delta_t = \{(x,y)\in\Reals^2 \mid
    x \in [x^0_t, x^1_t]\ \land \ {}
    y \leq \ylow_t \ \land \ {}
    f^0_t(x) \leq y \land f^1_t(x) \leq y\}\,,
\end{align*}
We also define the $x^*$-gap $\Delta^x_t$ and the $y^*$-gap  $\Delta^y_t$:
\begin{align*}
    \Delta^x_t &= \{x\mid (x,\cdot)\in\Delta_t\}\,, &
    \Delta^y_t &= \{y\mid (\cdot, y)\in\Delta_t\}\,. \\
\end{align*}
Then necessarily $(x^*, f(x^*))\in\Delta_t$ for all $t$,
and in particular $x^*\in\Delta^x_t$ and $f(x^*)\in\Delta^y_t$.

Define 
$x^-_t = \inf \Delta^x_t$ and  
$x^+_t =\sup \Delta^x_t$.
Then $x^-_t$ (resp. $x^+_t$) is the intersection of $f^0_t(\cdot)$ (resp. $f^1_t(\cdot)$) with the horizontal line at $\ylow_t$, that is,
\begin{align}\label{eq:x-x+}
    \Delta^x_t &= [x^-_t, x^+_t]\,,&
    x^-_t &= x^0_t + \frac{y^0_t-\ylow_t}{-f'(x^0_t)} \,,&
    x^+_t &= x^1_t - \frac{y^1_t - \ylow_t}{f'(x^1_t)}\,.
\end{align}
Note that either $x^0_t=x^-_t$ or $x^1_t= x^+_t$.
Then the \gradalg{} algorithm queries in the middle of $\Delta^x_t$:
\begin{align*}
    x^q_t = \frac12(x^-_t + x^+_t)\,.
\end{align*}
As for bisection search, if $f'(x^q_t) < 0$, then $x^* > x^q_t$ and 
we set $x^0_{t+1} = x^q_t$ and $x^1_{t+1} = x^1_t$;
otherwise we set 
 $x^0_{t+1} = x^0_t$ and $x^1_{t+1} = x^q_t$ (the case illustrated in Figure~\ref{fig:xgap_gradient}.
The pseudocode is given in \cref{alg:gradient_line_search}.

The $y^*$-gap is used to stop the algorithm
when $|\Delta^y_t|\leq \ytol$ for some user-specified tolerance $\ytol$.
$\Delta^y_t$ is simply the $y$ interval between the intersection $Y_t$ of the two tangents, $f^0_t$ and $f^1_t$, and the horizontal line at $\ylow_t$:
\begin{align}\label{eq:ygap}
    \Delta^y_t &= [Y_t, \ylow_t]\,, &
    Y_t &=f^0_t(x) \text{ for $x$ s.t. } f^0_t(x)=f^1_t(x)  \notag\\
    &&Y_t&= \frac{y^1_t f'(x^0_t) - y^0_t f'(x^1_t) + f'(x^0_t)f'(x^1_t)(x^0_t - x^1_t)}{f'(x^0_t) - f'(x^1_t)}
\end{align}
 
Some experimental results can be found in \cref{tab:gradient_results}.
They show that \gradalg{} always outperforms the original bisection algorithm,
often by a factor 2 or more.

\begin{theorem}[\gradalg\ gap-reduction]
For \cref{alg:gradient_line_search}, 
assuming $\Delta^y_{t+1} > \ytol$,
at each iteration $t\geq 2$, the $x^*$-gap $\Delta^x_t=[x^-_t,x^+_t]$ reduces at least by a factor of 2:
\begin{align*}
\frac{|\Delta^x_{t+1}|}{|\Delta^x_t|}&= 
\frac12\cdot\frac{\ylow_{t+1}- z_t}{\max\{y^q_t,\ylow_t\} - z_t} ~\leq~ \frac12\,, &
z_t &= \fhigh_{t+1}(\xlow_{t+1})\,. 
\end{align*}
where $\fhigh_{t+1}$ is the tangent of $f$ at $\xhigh_{t+1}$.
\qedhere
\end{theorem}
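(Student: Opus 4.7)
The plan is to track how bisecting $\Delta^x_t$ at $x^q_t$ deforms the optimality interval, via a case analysis on the sign of $f'(x^q_t)$ and on the relative order of $y^q_t$ and $y^1_t$. By the left-right symmetry of the algorithm, I may assume WLOG $f'(x^q_t)<0$, so $x^0_{t+1}=x^q_t$ and $x^1_{t+1}=x^1_t$. Convexity makes $f'$ non-decreasing, hence $f'\leq 0$ on $[x^0_t,x^q_t]$ and $y^q_t\leq y^0_t$. Two sub-cases then arise. In (A1), $y^q_t\leq y^1_t$, giving $\xlow_{t+1}=x^q_t$, $\xhigh_{t+1}=x^1_t$, $\ylow_{t+1}=y^q_t$; since also $y^q_t\leq y^0_t$, one has $y^q_t\leq\ylow_t$ and $\max\{y^q_t,\ylow_t\}=\ylow_t$. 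In (A2), $y^q_t>y^1_t$, giving $\xlow_{t+1}=x^1_t$, $\xhigh_{t+1}=x^q_t$, $\ylow_{t+1}=y^1_t$; combined with $y^q_t\leq y^0_t$ this forces $\ylow_t=y^1_t=\ylow_{t+1}$ and $\max\{y^q_t,\ylow_t\}=y^q_t$.

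The computational engine is a single observation: along any tangent line, $x$ is an affine function of $y$, so ratios of $x$-differences on a fixed tangent equal ratios of the corresponding $y$-differences. In (A1), the tangent $\fhigh_{t+1}=f^1_t$ at $x^1_t$ is unchanged from step $t$ to step $t+1$; on it, the points $x^+_t$, $x^+_{t+1}$, and $x^q_t=\xlow_{t+1}$ lie at $y$-heights $\ylow_t$, $\ylow_{t+1}$, $z_t$ respectively. Using $x^-_{t+1}=x^q_t$ (because the tangent at $x^0_{t+1}=x^q_t$ crosses $y=\ylow_{t+1}=y^q_t$ at $x^q_t$ itself) and the midpoint identity $x^+_t-x^q_t=\tfrac12|\Delta^x_t|$, the ratio collapses to $\tfrac12\,(\ylow_{t+1}-z_t)/(\ylow_t-z_t)$. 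In (A2) the right endpoint freezes: $\ylow_{t+1}=\ylow_t=y^1_t$ gives $x^+_{t+1}=x^1_t=x^+_t$. The relevant tangent is now the new one, $\fhigh_{t+1}=f^q_t$, at $x^q_t$; on it, $x^q_t$, $x^1_t=\xlow_{t+1}$, and $x^-_{t+1}$ lie at $y$-values $y^q_t$, $z_t$, $\ylow_{t+1}$, and the same affine argument yields ratio $\tfrac12\,(\ylow_{t+1}-z_t)/(y^q_t-z_t)$.

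Both formulas coincide with the statement's $\tfrac12\,(\ylow_{t+1}-z_t)/(\max\{y^q_t,\ylow_t\}-z_t)$ via the identifications of $\max\{y^q_t,\ylow_t\}$ made in the first paragraph. For the $\leq\tfrac12$ bound, convexity gives $z_t=\fhigh_{t+1}(\xlow_{t+1})\leq f(\xlow_{t+1})=\ylow_{t+1}$, strictly under the non-termination hypothesis $\Delta^y_{t+1}>\ytol$ (else both tangents defining $\Delta_{t+1}$ would meet at $(\xlow_{t+1},\ylow_{t+1})$, forcing $|\Delta^y_{t+1}|=0$), while the case analysis already gives $\ylow_{t+1}\leq\max\{y^q_t,\ylow_t\}$. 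The symmetric case $f'(x^q_t)\geq 0$ follows by the $x\mapsto -x$ symmetry, and $t\geq 2$ is used only to guarantee the bracket condition $f'(x^0_t)\leq 0\leq f'(x^1_t)$. I expect the main obstacle to be correctly identifying $\fhigh_{t+1}$: it is inherited from iteration $t$ in (A1) but is the brand-new tangent at the query point in (A2), and the apparently asymmetric $\max$ in the denominator exactly records which sub-case is in force.
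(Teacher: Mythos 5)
Your proof is correct and follows essentially the same route as the paper's: a case analysis on the sign of $f'(x^q_t)$ and the ordering of $y^q_t$ against the retained endpoint, combined with the proportionality of $x$- and $y$-differences along the tangent at $\xhigh_{t+1}$. The only difference is organizational --- you take the WLOG over the gradient sign and spell out the affine/collinearity computation that the paper delegates to its figures and the word ``proportionality,'' which if anything makes the argument more self-contained.
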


\begin{figure}[tbph!]
    \centering
    \includegraphics[width=\textwidth]{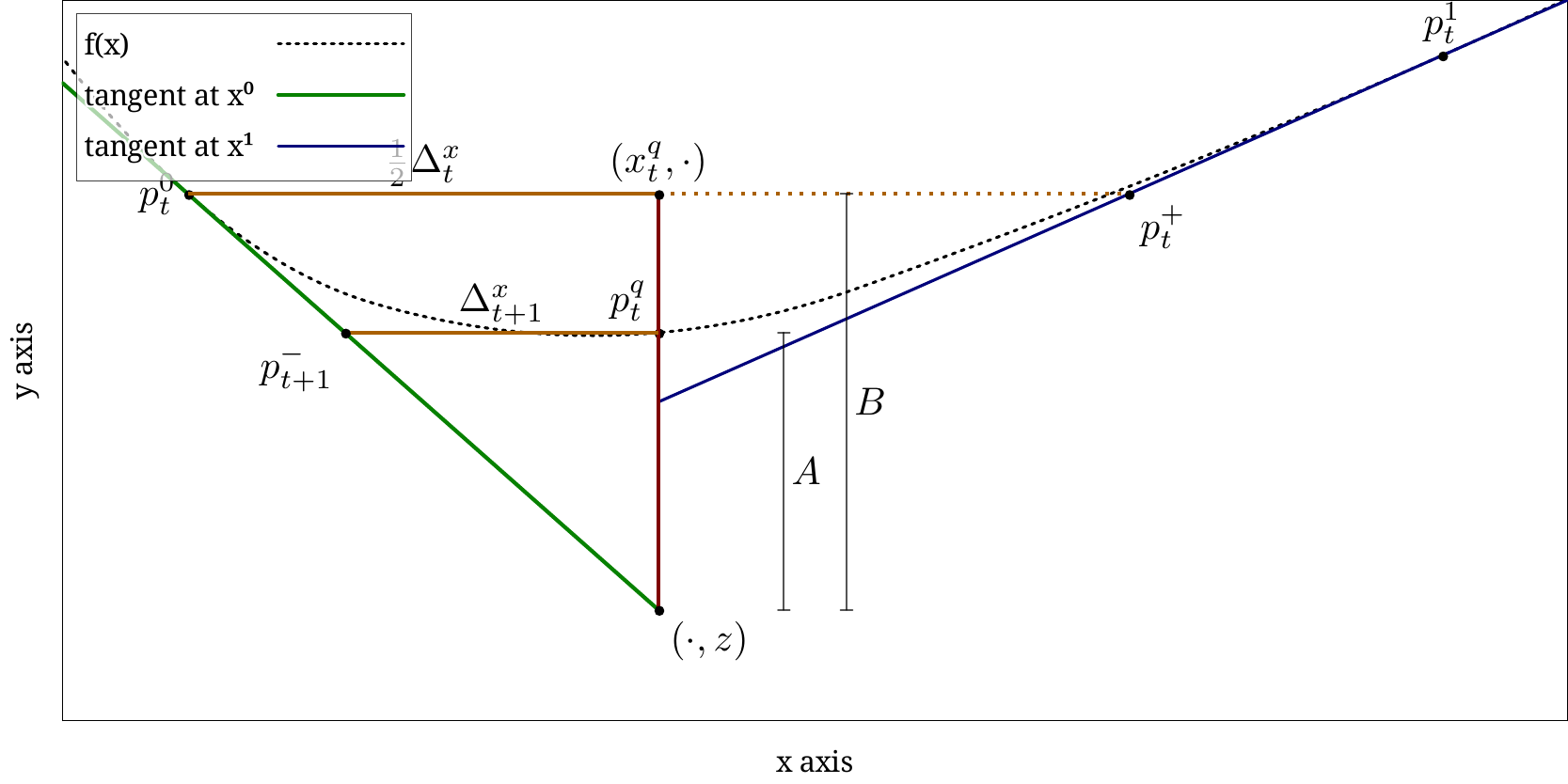}
    \caption{{\bf\boldmath \gradalg{} Case 1: $f'(x^q_t) > 0$ and  $y^q_t < y^0_t$,}
    that is, $x^* \in[x^0_t, x^q_t]$ or more precisely
    $x^* \in [x^-_{t+1}, x^q_t]$.
    The query point $(x^q_t, y^q_t)$ is denoted $q$ on the picture.
    Then $x^1_{t+1} = x^q_t = x^+_{t+1}$ and $x^0_t=x^0_{t+1}$.
    By proportionality, 
    $\frac{|\Delta^x_{t+1}|}{\frac12|\Delta^x_t|} = \frac AB = \frac{y^{q}_t - z}{y^{0}_t - z} = \frac{\ylow_{t+1} - z}{\ylow_t - z}$
    with 
    $z = f^0_t(x^q_t) = \fhigh_{t+1}(\xlow_{t+1})$.
    }
    \label{fig:case1}
\end{figure}

\begin{figure}[tbph!]
    \centering
    \includegraphics[width=\textwidth]{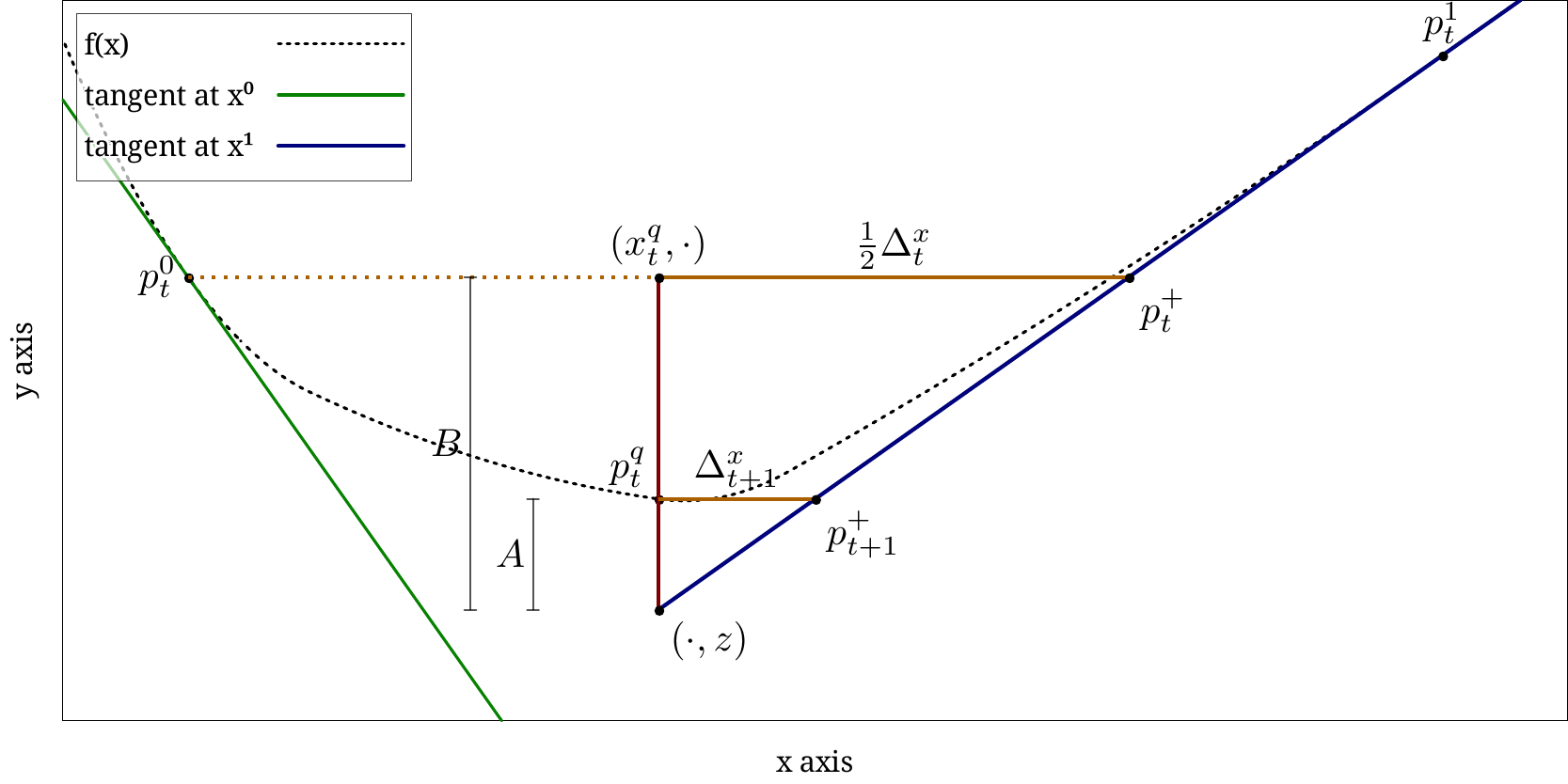}
    \caption{{\bf\boldmath \gradalg{} Case 2: $f'(x^q_t) < 0$ and $y^q_t < y^0_t$,}
    that is, $x^* \in[x^q_t, x^1_t]$ or more precisely
    $x^* \in [x^q_t, x^+_{t+1}]$.
    Then $x^0_{t+1} = x^q_t = x^-_{t+1}$ and $x^1_t=x^1_{t+1}$.
    By proportionality, $\frac{|\Delta^x_{t+1}|}{\frac12|\Delta^x_t|} = \frac AB = \frac{y^{q}_t - z}{y^{0}_t - z} = \frac{\ylow_{t+1} - z}{\ylow_t - z}$
    with
    $z = f^1_t(x^q_t) = \fhigh_{t+1}(\xlow_{t+1})$.    
    }
    \label{fig:case2}
\end{figure}

\begin{figure}[tbph!]
    \centering
    \includegraphics[width=\textwidth]{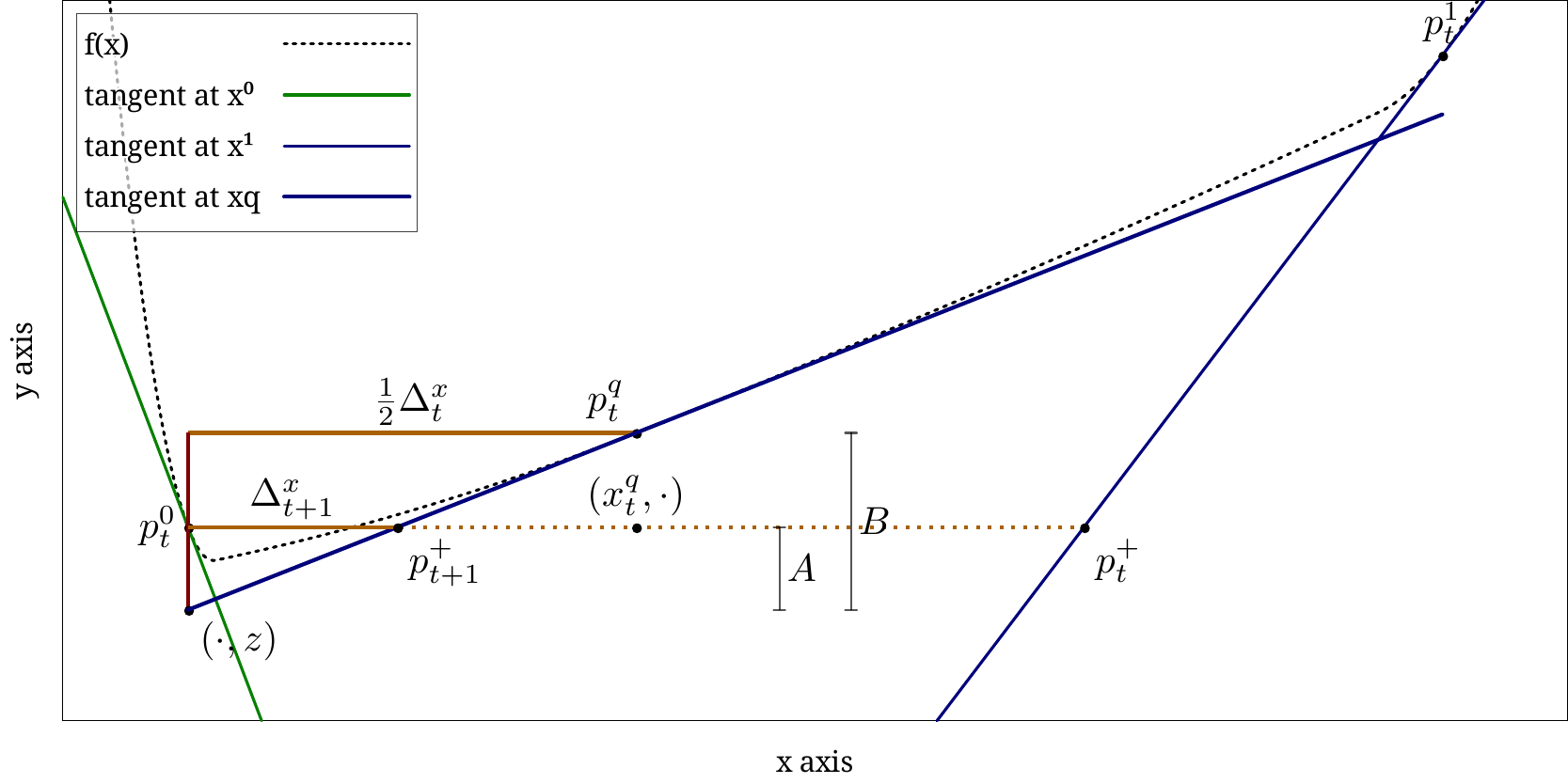}
    \caption{{\bf\boldmath \gradalg{} Case 3: $f'(x^q_t) > 0$ and $y^0_t < y^q_t < y^1_t$,}
    that is, $x^* \in[x^0_t, x^q_t]$ or more precisely
    $x^* \in [x^0_{t}, x^+_{t+1}]$).
    Then $x^1_{t+1} = x^q_t$ and $x^0_t=x^0_{t+1} = x^-_{t+1}$.
    Also note that $\ylow_t = \ylow_{t+1}=y^0_t$.
    By proportionality,
    $\frac{|\Delta^x_{t+1}|}{\frac12|\Delta^x_t|} = \frac AB = \frac{y^{0}_t - z}{y^{q}_t - z} = \frac{\ylow_{t+1} - z}{y^{q}_t - z}$
    with
    $z = f^q_t(x^0_t) = \fhigh_{t+1}(\xlow_{t+1})$.
    \todo{Remove x-axis and y-axis labels?}
    }
    \label{fig:case3}
\end{figure}

\newcolumntype{H}{>{\setbox0=\hbox\bgroup}c<{\egroup}@{}}
\begin{table}[tbph!]
    \centering
    \begin{tabular}{crr|rr}
    $f(x)$ & $\xleft$ & $\xright$ & Bisection & \gradalg{} \\
    \midrule
    $-x$                     & -20   & 7   & \textbf{0} & \textbf{0} \\
    $|x|$                    & -20   & 7   & 37         & \textbf{1} \\
    $\max\{-x, 2x\}$         & -20   & 7   & 37         & \textbf{19} \\
    $\max\{-x, 2x\}$         & -0.01 & 100 & 37         & \textbf{14} \\
    $|x|^{1.1}$              & -20   & 7   & 34         & \textbf{11} \\
    $x^2$                    & -20   & 7   & 20         & \textbf{12} \\
    $\sqrt{1+x^2}$           & -1000 & 900 & 27         & \textbf{19} \\
    $x\log x -x$             & 0.001 & 20  & 20         & \textbf{12} \\
    $\max\{x^2, (x-3)^2\}$   & -5    & 55  & 38         & \textbf{9} \\
    $\max\{x^2, (x/2-3)^2\}$ & -5    & 55  & 39         & \textbf{21} \\
    $x^4$                    & -20   & 7   & 11         & \textbf{8} \\
    $1/x^2 + x^2$            & 0.001 & 100 & 23         & \textbf{18}\\
    \end{tabular}
    \caption{{\bf
    Number of iterations (not counting the initial queries at the boundaries) required by bisection search and \gradalg{}
    for several convex functions},
    until provably reducing the $y^*$-gap $\Delta^y_t$ to at most a tolerance of $\ytol=10^{-10}$.    
    Observe that when the minimum is at a boundary, $\Delta^y_0=0$ and the algorithms terminate immediately.
    \gradalg{} is a little lucky for $f(x)=|x|$ because the $x^*$-gap is reduced to 0 on the second iteration, due to the symmetry of the function.
    }
    \label{tab:gradient_results}
\end{table}
\begin{table}[tbph!]
    \centering
    \begin{tabular}{crr|rr|rHr}
    $f(x)$ & $x_0$ & $x_1$ & \multicolumn{2}{c|}{w/ gradient queries} &\multicolumn{3}{c}{no gradient queries}\\
    &&& Bisection & \gradalg{}  & \nogradalg{} & halve area & GSS\\
    \midrule
    $-x$                     & -20   & 7   & 4  & 4           & \textbf{3}  & 3          & \textbf{3} \\
    $|x|$                    & -20   & 7   & 78 & \textbf{6}  &          7  & 6          & 50                    \\
    $\max\{-x, 2x\}$         & -20   & 7   & 78 & 42          & \textbf{23} & 24         & 50                    \\
    $\max\{-x, 2x\}$         & -0.01 & 100 & 78 & 32          & \textbf{18} & 19         & 52                    \\
    $|x|^{1.1}$              & -20   & 7   & 72 & \textbf{26} &          28 & 26         & 50                    \\
    $x^2$                    & -20   & 7   & 42 & 28          & \textbf{27} & 23         & 33                    \\
    $\sqrt{1+x^2}$           & -1000 & 900 & 58 & 42          & \textbf{23} & 22         & 41                    \\
    $x\log x -x$             & 0.001 & 20  & 44 & 28          & \textbf{23} & 21         & 32                    \\
    $\max\{x^2, (x-3)^2\}$   & -5    & 55  & 80 & 22          & \textbf{18} & 20         & 60                    \\
    $\max\{x^2, (x/2-3)^2\}$ & -5    & 55  & 82 & 46          & \textbf{26} & 25         & 58                    \\
    $x^4$                    & -20   & 7   & 26 & 20          & \textbf{18} & 10         & 21                    \\
    $1/x^2 + x^2$            & 0.001 & 100 & 50 & 40          & \textbf{31} & 31         & 36 \\
    \end{tabular}
    \caption{
    {\bf Number of \emph{queries}, 
    including the initial queries at the boundaries required by different algorithms}
    until provably reaching a $y^*$-gap of $|\Delta^y(P_t)|\leq \ytol=10^{-10}$,
    for several convex functions.
    For gradient-based algorithms, a function+gradient query counts as two queries.
    Note that our version of the golden-section search algorithm (GSS)
    also uses the 5-point $y^*$-gap $\Delta^y(P_t)$ as stopping criterion.
    }
    \label{tab:no_gradient_results}
\end{table}

\begin{proof}
For the inequality, observe that $\max\{y^q_t,\ylow_t\} \geq \ylow_t \geq \ylow_{t+1}$,
therefore $|\Delta^x_{t+1}|/|\Delta^x_t|\leq\frac12$,
since $\ylow_{t+1}>z_t$.

For the equality, first of all, if $f'(\xleft) \geq 0$ or $f'(\xright)\leq 0$, 
then the algorithm terminates immediately
since the $y^*$-gap $\Delta^y_t$ is zero.
Hence, in what follows we can safely assume that $f'(\xleft)<0$ and $f'(\xright)>0$, which implies that $f'(x^0_t) \leq 0$ and $f'(x^1_t) \geq 0$ for all $t$.

We focus on the case $y^0_t \leq y^1_t$. 
The case $y^0_t > y^1_t$ is handled similarly.

The result is shown by considering 3 separate cases.
The first case, $f'(x^q_t) > 0$ and  $y^q_t < y^0_t$,
is considered in \cref{fig:case1} (see caption).
The second case, $f'(x^q_t) < 0$ and $y^q_t < y^0_t$,
is considered in \cref{fig:case2}.
The last case, $f'(x^q_t) > 0$ and $y^0_t < y^q_t < y^1_t$,
is considered in \cref{fig:case3}.
All of these cases follow by using proportionality
and each figure shows that the result holds for each particular case.

Finally, the case $y_q > y^1_t$ is impossible by convexity,
as well as the case
$f'(y^q_t)>0$ and $y^q_t > y^0_t$.
\end{proof}

\begin{remark}[Root finding]
\gradalg{} can be adapted to improve upon bisection search for root finding of monotone increasing functions, if integral information is available.  
\end{remark}

\mh{Have you tried to use $x:f^0_t(x)=f^1_t(x)$ as the next query point in practice?}

\begin{algorithm}[tbph!]
\caption{{\bf (\gradalg{})} Line search for convex functions with gradient information.
\todo{inputs and outputs}
\todo{We can sometimes avoid a query if the end interval is already the minimum}
}
\label{alg:gradient_line_search}
\begin{lstlisting}[language=python,escapeinside={(*}{*)}]
def (*\gradalg{}*)($f, f', \xleft, \xright,$ y_tol):
  $x^0 = \xleft$, $y^0 = f(x^0)$, $g^0 = f'(x^0)$
  $x^1 = \xright$, $y^1 = f(x^1)$, $g^1 = f'(x^1)$
  for t = 1, 2, ...:
    if $|\Delta^y_t| \leq$ y_tol: return $P_t$ #  see (*\lstcommentcolor{\cref{eq:ygap}}*)
    $x^- = \inf \Delta^x_t$  # see (*\lstcommentcolor{\cref{eq:x-x+}}*)
    $x^+ = \sup \Delta^x_t$
    $x^q = (x^- + x^+)/2$
    $P_{t+1} = P_{t} \cup \{(x^q, f(x^q))\}$
\end{lstlisting}
\end{algorithm}

\subsection{\nogradalg: Convex line search without a gradient}

When gradient information is not available,
at the very least we can mimic the \gradalg{} algorithm
by querying twice at two very close points, which straightforwardly (disregarding numerical stability issues) gives
a reduction of a factor at least 2 every second query, that is, a reduction of a factor $\sqrt{2}$ of the $x^*$-gap at every iteration on average.
This would require exactly twice as many iterations as the gradient-based algorithm, but querying twice at the same point seems wasteful:
after the first one of two close queries, some information is already gained and the $x^*$-gap has likely already reduced, so why then query at almost the same point again?

We propose a no-gradient line search algorithm 
\nogradalg{}
for general convex functions that is also guaranteed to reduce the $x^*$-gap by a factor at least 2 at every second iteration,
but is likely to reduce it by more than this.
This is validated on experiments, which also show that \nogradalg{} is often significantly faster than golden-section search, and than \gradalg{} when counting a gradient query as a separate query.

First we need to define a version of the optimality region $\Delta(P)$ based only on a set of points $P$, without gradient information.

\subsubsection{Five-point optimality region}

Let $f^{ab}_t(x)$
be the line going through the points $p^a_t$ and $p^b_t$ (see \cref{sec:notation}).
Let $f$ be a convex function on $[\xleft, \xright]$, 
and let $p^a=(x^a, f(x^a)), p^b=(x^b, f(x^b))$,
then we know by convexity that not only $f(x) \leq f^{ab}(x)$ for all $x \in[x^a,x^b]$,
but just as importantly we also have
$f(x) \geq f^{ab}(x)$ for all $x\notin[x^a,x^b]$.
Let $P$ be a $(f,\xleft, \xright)$-convex set of points.
Recall that $x^*\in \argmin_{x\in[\xleft,\xright]} f(x)$.
Then, we use these convexity constraints to define the \emph{optimality region}
$\Delta(P)$ that contains $(x^*, f(x^*))$,
together with $f(x^*) \leq \ylow(P)$ (see \cref{sec:notation}):
\begin{align}\label{eq:gap}
    \Delta(P) = \bigg\{(x,y)\in\Reals^2{}\mid{}&
    x\in\left[\min_{(x',\cdot)\in P} x', \max_{(x',\cdot)\in P} x'\right]\ \land \notag\\
    &y\leq \ylow(P)\ \land \notag\\
    &\forall p^i, p^j\in P, x\notin(x^i, x^j): f^{ij}(x) \leq y
    \bigg\}\,,
\end{align}
and we define the \emph{$x^*$-gap} $\Delta^x(P)$ and \emph{$y^*$-gap} $\Delta^y(P)$:
\begin{align}\label{eq:xygap}
    \Delta^x(P) &= \{x\mid (x,\cdot)\in\Delta(P)\}\,, &
    \Delta^y(P) &= \{y\mid (\cdot, y)\in\Delta(P)\}\,. 
\end{align}

\begin{example}
Take $f(x)=x$,
and $P = \{(0, 0), (\tfrac12, \tfrac12), (1,1)\}$
then $\Delta(P) = \{(0,0)\}$.
\end{example}

\begin{example}
Take $f(x)=|x|$,
and $P = \{(-10, 10), (-5, 5), (-1, 1), (1, 1), (5,5), (10,10)\}$
then $\Delta^x(P) = [-1, 1]$ and $\Delta^y(P) = [0, 1]$.
\end{example}

\begin{theorem}[Optimality region]\label{thm:gap}
Let $P$ be a $(f,\xleft,\xright)$-convex set of points.
Then,
\begin{align*}
    (x^*, f(x^*))\in \Delta(P) ~~\text{for all}~~ x^*\in\argmin_{x\in[\xleft,\xright]} f(x)\,.
    &\qedhere
\end{align*}
\end{theorem}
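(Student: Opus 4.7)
The plan is to verify each of the three conjuncts in the definition \eqref{eq:gap} of $\Delta(P)$ at the candidate point $(x^*,y^*)$, where $y^* = f(x^*)$. The proof is essentially a direct unpacking of convexity, and each conjunct corresponds to a different structural feature that the optimality region was designed to encode.

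First I would dispatch the $x$-membership condition $x^* \in [\min_{(x',\cdot)\in P} x', \max_{(x',\cdot)\in P} x']$: this is exactly what the $(f,\xleft,\xright)$-convexity hypothesis on $P$ guarantees (that definition was tailor-made so that any minimizer of $f$ lies between the extreme points of $P$, and the parenthetical remark just after the definition notes it is trivially ensured whenever the endpoints $\xleft,\xright$ are included). Next, the bound $y^* \leq \ylow(P)$ is immediate: every $(x,y)\in P$ satisfies $y = f(x) \geq f(x^*) = y^*$ by definition of the minimum, so $y^* \leq \min_{(x,y)\in P} y = \ylow(P)$.

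The substantive step is the third conjunct, namely that for every $p^i,p^j \in P$ with $x^* \notin (x^i,x^j)$ we have $f^{ij}(x^*) \leq y^*$. Assume without loss of generality $x^i < x^j$, and split into the cases $x^* \leq x^i$ and $x^* \geq x^j$ (when $x^* \in \{x^i,x^j\}$, equality $f^{ij}(x^*) = y^*$ is immediate). In either case one of $x^i$ or $x^j$ is the middle one of the three ordered values in $\{x^*,x^i,x^j\}$, so that middle value can be written as a convex combination of the two outer values. Applying convexity of $f$ to that combination yields an inequality of the form $f(\text{middle}) \leq \lambda f(\text{outer}_1) + (1-\lambda) f(\text{outer}_2)$, which rearranges algebraically into $f(x^*) \geq f^{ij}(x^*)$; this is the standard fact that the secant through two points of a convex function lies \emph{below} the function when extrapolated outside the two points (the dual of the more familiar interpolation statement).

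The expected obstacle is really just careful bookkeeping in the rearrangement: writing the middle point explicitly as $\lambda \cdot \text{outer}_1 + (1-\lambda)\cdot\text{outer}_2$ with the correct $\lambda$ and then solving the resulting inequality for $f(x^*)$, so that both sub-cases collapse into the same secant inequality. No additional ingredient beyond convexity of $f$ and membership of $P$'s points on the graph of $f$ is needed, and combining the three verified conjuncts gives $(x^*,f(x^*)) \in \Delta(P)$ as claimed.
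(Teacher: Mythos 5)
Your proposal is correct and follows essentially the same route as the paper's proof: verify each conjunct of \cref{eq:gap} at $(x^*,f(x^*))$, with the $y\leq\ylow(P)$ bound coming from minimality of $x^*$ and the secant conditions coming from the standard fact that a chord of a convex function lies below the graph outside the interval between its endpoints. The paper simply asserts that last fact ``by convexity'' where you spell out the convex-combination bookkeeping, and it leaves the $x$-membership conjunct implicit in the $(f,\xleft,\xright)$-convexity hypothesis, but there is no substantive difference.
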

\begin{proof}
First, by definition of $x^*$ and $P$, we must have $f(x^*) \leq \ylow(P)$.
Additionally,
for all $(x^1, y^1)\in P$ and $(x^2, y^2)\in P$,
if $x^*\notin (x^1, x^2)$ then by convexity of $f$ we have that
$f(x^*) \geq f^{12}(x^*)$.
Therefore, by definition of $\Delta$ in \cref{eq:gap}, it follows that $(x^*, f(x^*))\in \Delta(P)$.
\end{proof}

If only two points are available, that is, $|P|=2$, then $\Delta^x(P) = [\xlow(P), \xhigh(P)]$,
and $\Delta^y(P) = (-\infty, \ylow(P)]$.
For $|P|\geq 3$, $\Delta^y(P)$ is (usually) not infinite anymore,
and the optimality region $\Delta(P)$ consists of at most two triangles, 
while both $\Delta^x(P)$ and $\Delta^y(P)$ are bounded intervals ---
see \cref{fig:5point}.
Also, it turns out that having more than 5 points does not help, because the constraints generated by the points further away from $\plow(P)$ provide only redundant information.

\begin{theorem}[Five points are enough]\label{thm:five_points}
Let $P=\{p^i\}_{i\in[|P|]}$ be a $(f,\xleft,\xright)$-convex set of points such that $x^1 \leq x^2\leq \dots$
Let $k$ be such that $p^k=\plow(P)$, and assume
\footnote{This assumption can always be made to hold by adding specific virtual points to $P$.\XXX\todo{See below, but the resulting set of points is not $(f,\xleft,\xright)$-convex anymore, but is $(\tilde f,\xlow(P),\xright(P))$-convex where $\tilde f$ is infinite at the boundaries,
but otherwise is like $f$}}
that $3\leq k\leq |P|-2$.
Define $P'$ to be the subset of $P$ containing five points centered around the minimum, that is,
\begin{align*}
    P' = \{p^{k-2}, p^{k-1}, p^{k}, p^{k+1}, p^{k+2}\}
\end{align*}
then $P'$ is $(f,\xleft,\xright)$-convex and 
\begin{align*}
    \Delta(P') = \Delta(P)\,.
    &\qedhere
\end{align*}
\end{theorem}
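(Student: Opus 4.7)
The plan is to establish both inclusions $\Delta(P) \subseteq \Delta(P')$ and $\Delta(P') \subseteq \Delta(P)$. As preliminaries, $p^k \in P'$ gives $\ylow(P') = \ylow(P) = y^k$, and $P' \subseteq P$ places every secant constraint of $\Delta(P')$ among those of $\Delta(P)$. So the direction $\Delta(P) \subseteq \Delta(P')$ reduces to showing the $x$-range: that $(x,y) \in \Delta(P)$ forces $x \in [x^{k-2}, x^{k+2}]$. I would obtain this from the secant constraints of the pairs $(k{-}1, k)$ and $(k, k{+}1)$ (both in $(P')^2 \subseteq P^2$): for $x < x^{k-1}$, the nonpositivity of $a^{k-1, k}$ gives $y \geq f^{k-1,k}(x) \geq y^{k-1} \geq y^k$, incompatible with $y \leq y^k$ in the generic case; symmetrically on the right. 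In particular $x \in [x^{k-1}, x^{k+1}] \subseteq [x^{k-2}, x^{k+2}]$.

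The substantive direction is $\Delta(P') \subseteq \Delta(P)$. Given $(x,y) \in \Delta(P')$, I need to check $y \geq f^{ij}(x)$ for every pair $(i,j) \in P^2$ with $x \notin (x^i, x^j)$. I would case-split by where $(i,j)$ sits relative to the window $\{k{-}2, \ldots, k{+}2\}$. Pairs with both endpoints in $P'$ are already part of $\Delta(P')$'s definition. Pairs that straddle $P'$, i.e.\ $i < k{-}2$ and $j > k{+}2$, are vacuous because $(x^i, x^j) \supseteq [x^{k-2}, x^{k+2}] \supseteq \Delta^x(P') \ni x$. The remaining pairs have both endpoints on one side of $P'$, or exactly one endpoint in $P'$; in every such case I would show the secant is dominated, on the relevant $x$-range, by a $P'$-secant on the same side --- either $(k{-}2, k{-}1)$ on the left or $(k{+}1, k{+}2)$ on the right.

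The domination uses two ingredients from convexity: first, the three-slope lemma $a^{ab} \leq a^{ac} \leq a^{bc}$ for $a<b<c$ orders the slopes so that a further-out secant has a more extreme slope; second, the inequality $f(x^m) \geq f^{ij}(x^m)$ whenever $x^m \notin (x^i, x^j)$ places the far secant at or below the anchor point of the dominating $P'$-pair. For example, for a pair with $j \leq k{-}2$: $f^{ij}(x^{k-2}) \leq y^{k-2} = f^{k-2,k-1}(x^{k-2})$ and $a^{ij} \leq a^{k-2, k-1}$ together yield $f^{ij}(x) \leq f^{k-2, k-1}(x)$ for every $x \geq x^{k-2}$, so the $(i,j)$-constraint is implied by the $(k{-}2, k{-}1)$-constraint. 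The subcase with $i \in P'$ and $j > k{+}2$ is analogous, pinned at $x^{k+1}$ and dominated by $(k{+}1, k{+}2)$, and the two mirror cases follow by symmetry.

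The main obstacle is verifying that the chosen dominating $P'$-pair is itself active at $x$, since its constraint only fires when $x$ lies outside its open interval. This requires a prior analysis of $\Delta^x(P')$: applying the same slope argument as in the first paragraph but now inside $P'$, one obtains in the generic case $\Delta^x(P') \subseteq [x^{k-1}, x^{k+1}]$, placing $x$ outside both $(x^{k-2}, x^{k-1})$ and $(x^{k+1}, x^{k+2})$ so that $(k{-}2, k{-}1)$ and $(k{+}1, k{+}2)$ are indeed active. Pathological cases with repeated $y$-values around $p^k$ would need separate treatment; the footnote to the theorem statement suggests reducing them to the generic setting by padding $P$ with specific virtual points.
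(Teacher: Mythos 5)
Your overall strategy is the paper's: confine $\Delta^x$ to a small window around $p^k$ (so that constraints from pairs straddling the window are vacuous) and dominate every remaining outer secant by an inner one via convexity of the slopes. The paper organizes this as a recursive removal of the innermost discarded point (showing $\Delta(P\setminus\{p^{k-3}\})=\Delta(P)$ and iterating) rather than your direct two-inclusion argument, and, like you, it only really treats the generic case, asserting $\Delta^x(P)\subseteq[x^{k-1},x^{k+1}]$ without worrying about ties in the $y$-values around $p^k$. Your explicit handling of the $x$-range clause in the direction $\Delta(P)\subseteq\Delta(P')$ is a point the paper glosses over.

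There is, however, one subcase where your named dominating pair is wrong. For a mixed pair $(i,j)$ with $i\in\{k-2,\dots,k\}$ and $j>k+2$ (active only for $x\le x^i$), you cannot pin at $x^{k+1}$: since $x^{k+1}\in(x^i,x^j)$, convexity gives $f^{ij}(x^{k+1})\ge y^{k+1}$, the wrong direction, and domination by $(k+1,k+2)$ genuinely fails. Concretely, for $i=k$ one has $f^{kj}(x^k)=y^k$ while $f^{k+1,k+2}(x^k)=y^{k+1}-a^{k+1,k+2}(x^{k+1}-x^k)\le y^k$, with strict inequality whenever $a^{k+1,k+2}>a^{k,k+1}$; so at $x=x^k$ the $(k,j)$-constraint is strictly stronger than the $(k+1,k+2)$-constraint and is not implied by it. The fix is to dominate through the \emph{shared} endpoint instead: $f^{ij}$ and $f^{i,k+1}$ meet at $p^i$ and $a^{i,k+1}\le a^{ij}$, so $f^{ij}(x)\le f^{i,k+1}(x)$ for all $x\le x^i$, which is exactly the active range, and $(i,k+1)$ is a $P'$-pair. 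This shared-endpoint domination is precisely the device the paper uses throughout (it dominates $f^{i,k-3}$ by $f^{i,k-1}$), and with it your argument closes. Finally, you leave the claim that $P'$ is $(f,\xleft,\xright)$-convex unaddressed; it follows immediately from $x^*\in\Delta^x(P)\subseteq[x^{k-1},x^{k+1}]\subseteq[\xlow(P'),\xhigh(P')]$.
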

\begin{proof}
Observe that necessarily $\Delta^x(P)\subseteq [x^{k-1}, x^{k+1}]$.
Recall that $f^{kj}$ is the line going through $p^k$ and $p^j$.

First, We show that $p^{k-3}$ (if it exists) is superfluous.

For all $i\geq k+1$,
for all $x\in \Delta^x(P)$
then necessarily $x\in [x^{k-3}, x^{i}]$,
and thus the constraint on $f^{k-3,i}$ in \cref{eq:gap}
is vacuous.
Thus, all constraints in \cref{eq:gap} involving $f^{k-3,i}$ can be removed.

For all $i \leq k-1, i\neq k-3$,
by convexity
we have $f^{i, k-3}(x^{k-1}) \leq y^{k-1}$
and thus 
for all $x\geq x^{k-1}$,
we have $f^{i, k-3}(x) \leq f^{i, k-1}(x)$.
Hence,
for all $x\geq x^{k-1}$,
for all $y$, $f^{i, k-1}(x) \leq y$ implies that $f^{i, k-3}(x) \leq y$.
Thus all constraints involving $f^{i, k-3}$ in \cref{eq:gap}
for all $i \leq k-1, i\neq k-3$ are superfluous.

Similarly, the constraint involving $f^{k-3, k}$ is made redundant by the constraint involving $f^{k-1, k}$.

Therefore, all constraints in \cref{eq:gap} involving $p^{k-3}$ are superfluous, and thus $\Delta(P\setminus\{p^{k-3}\})=\Delta(P)$.

Recursively, we can thus eliminate from $P$ all points to the left of $p^{k-2}$,
and similarly we can eliminate all points to the right of $k+2$.
Finally, there remain only the points of $P'$.

The fact that $P'$ is $(f,\xleft,\xright)$-convex is now straightforward.
\end{proof}

For simplicity, in what follows we consider the set $P_t$ of all points 
queried up to step $t$,
but an algorithm should instead only maintain the 5 points around the minimum --- as per \cref{thm:five_points}.

\begin{example}[4-point vs 3-point $y^*$-gap]
Take $f(x)= |x|$,
and $P=\{(-a, a), (\eps, \eps), (1, 1)\}$ for $\eps \in (0, 1)$ and $a > 0$,
and take $Q= P \cup \{(-(a+1), a+1)\}$.
Note that the additional point in $Q$ is further away from the minimum 0 of $f$ than the points in $P$.
Then it is easy to check that
$|\Delta^y(P)| = a+\eps$,
while $|\Delta^y(Q)|=\eps$.
Hence, a 4-point $y^*$-gap can be arbitrarily more accurate than a 3-point one.
Also note that more points only constrain the optimality region more, that is,
$\Delta(P\cup \{p\}) \subseteq \Delta(P)$ for any point $p$
as long as $P\cup p$ satisfies the assumptions of \cref{thm:five_points}.
\end{example}

The optimality region $\Delta(\cdot)$ can be used with existing line-search algorithms, such as golden-section search for example, assuming the function to minimize is convex, stopping the line search when $|\Delta^y(P_t)|\leq \ytol$.

For a given set $P=\{p^i\}_{i\in \{0,1,2,3,4\}}$ of five points,
assuming $\plow(P) = p^2$,
similarly to \cref{eq:x-x+,eq:ygap} we have
\begin{align}\label{eq:five_points_Deltas_exprs}
    x^-&=\inf \Delta^x(P) = x^1 + \frac{y^2-y^1}{a^{01}}\,, &
    a^{01}&= \frac{y^0-y^1}{x^0-x^1}\,, \notag\\
    x^+&=\sup \Delta^x(P) = x^3 + \frac{y^2-y^3}{a^{34}}\,, &
    a^{34}&= \frac{y^3-y^4}{x^3-x^4}\,, \notag\\
    |\Delta^x(P)| &= x^+ - x^-\,, \notag\\
    |\Delta^y(P)| &= y^2 - \min\{y^{01,23}, y^{12,34} \}\,, 
\end{align}
where $x^-$ is the intersection of $f^{01}$ with the horizontal line at $y^2=\ylow(P)$,
$x^+$ is the intersection of $f^{34}$ with $y^2$.
See \cref{eq:intersection} for the definition of $y^{ij,kl}$ and
\cref{sec:stability} for numerical stability considerations.

\begin{remark}[Missing points]
If there are fewer than 2 points to the left and right of $\plow(P)$,
virtual points can be added to restore this property.
Assume $P=\{p^2, p^3, \dots ,p^{n-2}\}$ with $x^2 \leq x^3 \leq \dots \leq x^{n-2}$ is $(f,\xleft,\xright)$-convex.
Define $Q = P\cup \{p^0, p^1, p^{n-1}, p^n\}$ 
where $x^0 = x^2 - 2\eps$, $x^1 = x^2 - \eps$,
$x^{n-1}  = x^{n-2} + \eps$, $x^n = x^{n-2}+2\eps$,
and 
$y^0=y^1=y^{n-1}=y^{n-2}=\infty$
for some infinitesimally small $\eps$.
Then necessarily $\plow(P)$ is surrounded by 2 points on each side,
while $\Delta(Q)=\Delta(P)$ since no information regarding the location of the minimum is introduced,
since by assumption $x^* = \argmin_{x\in[\xleft,\xright]} \in\Delta(P)$.
Technically, $Q$ is \emph{not} $(f,\xleft,\xright)$-convex,
but instead $(\tilde f,\xlow(P),\xhigh(P))$-convex where $\tilde f = f$ except on the additional points of $Q$.
\end{remark}

\subsubsection{\nogradalg}

The \nogradalg{} algorithm is an
adaptation of \gradalg{}, using $\Delta(P_t)$ instead of $\Delta_t$.
See \cref{alg:five_point_naive}.
It may happen in rather specific situations that $\xlow(P_t)$ is exactly the midpoint of $\Delta^x(P_t)$, which means that the query $x^q_t$ would coincide with $\xlow(P_t)$.
This poses no issue with the theory below as we can simply take the limit toward the midpoint, in which case the secant of these two infinitesimally close points would give a gradient information at $\xlow(P_t)$.
However, it does pose numerical issues as the algorithm would instead loop forever.
This can be easily circumvented by moving the query slightly away
\footnote{Still for numerical stability, this `repulsion' should even be applied whenever $|\xlow(P_t) - \Delta^x(P_t)/2| <\eps$.
Querying the gradient at $\xlow(P_t)$, if available, would be even better.}
from $\xlow(P_t)$ by some factor $\eps$ of $|\Delta^x(P_t)|$.

\begin{algorithm}
\caption{{\bf (\nogradalg{} line search algorithm for 1-d convex minimization)}
The improvement of \cref{rmk:save_one_query} is not implemented.
We advise to take $\eps= 2^{-7}$.
Not that in practice at most 5 points around $\plow(P_t)$ actually need to be maintained in $P_t$, as per \cref{thm:five_points}.
We decompose \nogradalg{} into two functions because we will re-use \nogradalg{}\code{_core} elsewhere.
\mh{Explain $\eps$ business}
}
\label{alg:five_point_naive}
\begin{lstlisting}[language=python,escapeinside={(*}{*)}]
def (*\nogradalg{}*)($f, \xleft, \xright,$ y_tol):
  def stop_when($P$): return $|\Delta^y(P)| \leq$ y_tol
  $P = \{(\xleft,f(\xleft)), (\xright, f(\xright))\}$
  return (*\nogradalg{}*)_core($f, P$, stop_when)
  
def (*\nogradalg{}*)_core($f, P_1$, stop_when):
  for t = 1, 2, ...:
    if stop_when($P_t$): return $P_t$
    (*\warning{Use $t$ indices?}*)
    $x^- = \inf \Delta^x(P_t)$ # see (*\lstcommentcolor{\cref{eq:five_points_Deltas_exprs}}*)
    $x^+ = \sup \Delta^x(P_t)$
    $x^m = (x^- + x^+)/2$
    if $(x^q, \cdot)\in P_t$:
      $x^q = x^m + \eps(x^+ - x^-)$  # necessarily, $\lstcommentcolor{x^q \notin P_t}$ for $\lstcommentcolor{\eps \in (0, \tfrac12)}$
    else:
      $x^q = x^m$
    $P_{t+1} = P_{t} \cup \{(x^q, f(x^q))\}$ # simple but inefficient: some points can be removed
\end{lstlisting}
\end{algorithm}

\begin{remark}[Saving one query]\label{rmk:save_one_query}
One early  function query can sometimes be saved:
given the initial bounds $\xleft$ and $\xright$, 
after obtaining $f(\xleft)$ and $f(\xright)$, 
a query at $x^q_1=(\xleft+\xright)/2$ is always required
since the algorithm cannot terminate with only 2 points,
since $|\Delta^y(\cdot)| = \infty$ (by contrast to \gradalg).
However, if we query at $x^q_1$ \emph{before} querying at $\xright$
and if it happens that  $f(\xleft) \leq f(x^q)$,
then by convexity it is not useful to query at $\xright$.
\end{remark}

\begin{figure}
    \centering
    \includegraphics[width=\textwidth]{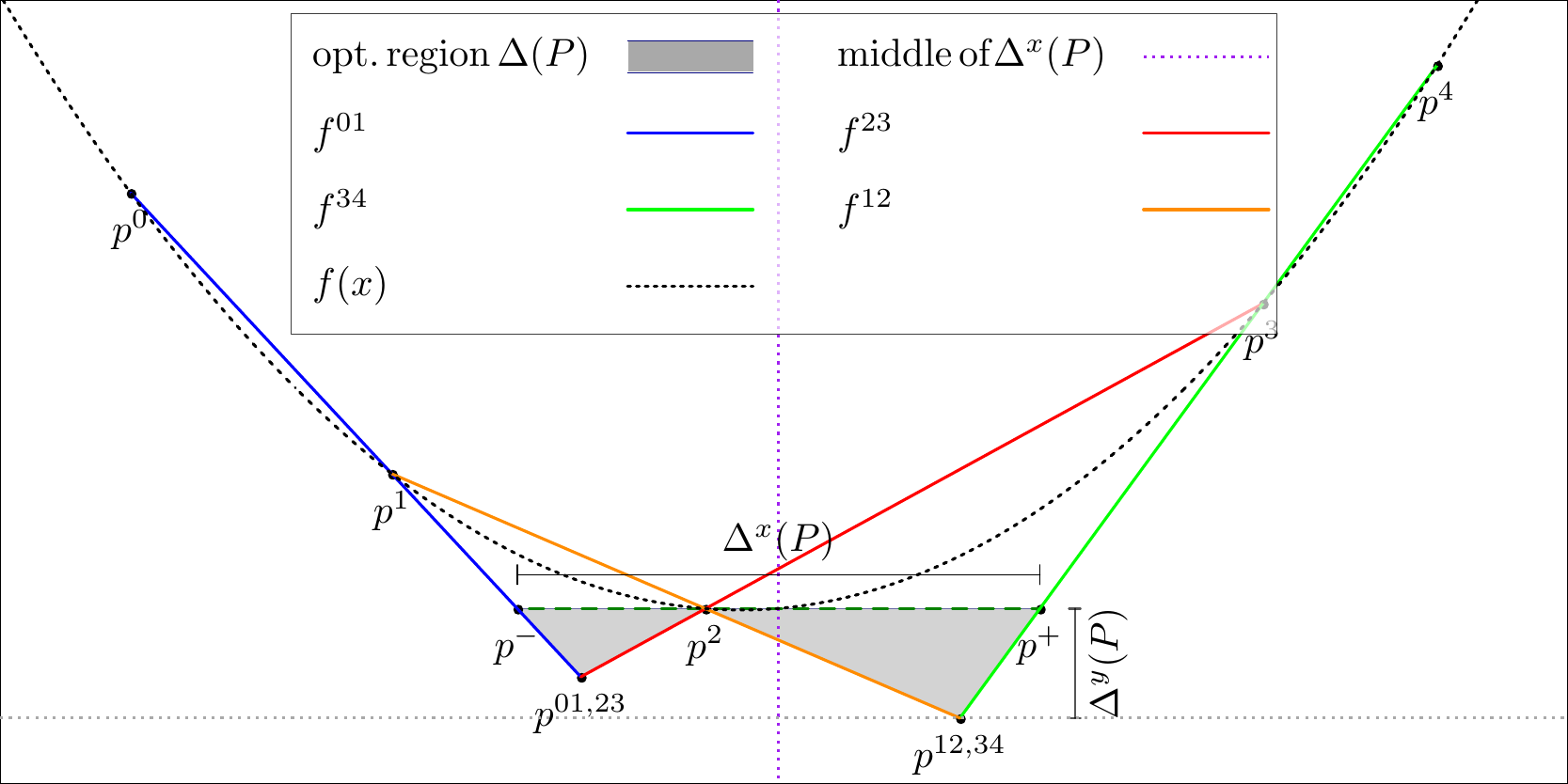}
    \caption{{\bf (\nogradalg)}
    The optimality region $\Delta$ based on 5 points
    $P=\{p^0, \dots p^4\}$ on a convex function $f$, without gradient information.
    The middle of $\Delta^x(P)$ corresponds to the query performed by \nogradalg.
    }
    \label{fig:5point}
\end{figure}

\begin{theorem}[\nogradalg{} $\Delta^x$ reduction]
For \nogradalg{} in \cref{alg:five_point_naive} where we take $\eps\to 0$,
for any iteration $t\in\Naturals$ such that $\Delta^y(P_{t+1}) > \ytol$ (\ie the algorithm does not terminate before $t+2$),
the $x^*$-gap $|\Delta^x(\cdot)|$ reduces by at least a factor 2 every second iteration:
for every $t\in\Naturals$, $|\Delta^x(P_{t+2})| \leq \frac12|\Delta^x(P_t)|$.
\end{theorem}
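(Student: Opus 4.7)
The plan is to reduce this statement to the one-step halving theorem for \cref{alg:gradient_line_search}, exploiting the fact that two function queries jointly convey at least as much local information as one gradient query. First I would record the monotonicity lemma $\Delta(P \cup \{p\}) \subseteq \Delta(P)$, which is immediate from \cref{eq:gap} because adding a point only introduces new secant constraints and can only lower $\ylow(P)$. In particular $|\Delta^x(P_{t+2})| \leq |\Delta^x(P_{t+1})| \leq |\Delta^x(P_t)|$, and $\Delta^x(P_t)$ is always an interval because $\Delta(P_t)$ is a finite intersection of half-planes. Throughout, write $L_s := |\Delta^x(P_s)|$, $[x^-_s, x^+_s] := \Delta^x(P_s)$, and $x^m_s := (x^-_s + x^+_s)/2$.

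The argument then splits on whether the first of the two queries has already halved the $x^*$-gap. In \textbf{Case A}, $L_{t+1} \leq L_t/2$, and $L_{t+2} \leq L_{t+1} \leq L_t/2$ follows by monotonicity. In \textbf{Case B}, $L_{t+1} > L_t/2$. The key geometric observation is that the previous query $x^q_t = x^m_t$ must lie strictly inside $(x^-_{t+1}, x^+_{t+1})$: otherwise $[x^-_{t+1}, x^+_{t+1}]$ would sit entirely in one of the two halves of $[x^-_t, x^+_t]$ of length $L_t/2$, contradicting $L_{t+1} > L_t/2$.

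I would then subdivide Case B by the location of the next midpoint $x^m_{t+1}$. In \textbf{Subcase B1}, where $x^m_{t+1} = x^q_t$ (or more generally coincides with a pre-existing $x$-coordinate), the $\eps$-shift in \cref{alg:five_point_naive} activates, and as $\eps \to 0$ the secant through $p^q_t$ and $p^q_{t+1}$ converges to the tangent at $x^q_t$ with slope $f'(x^q_t)$. Hence iteration $t+2$ has at least the information of one gradient query at $x^q_t$, and the halving theorem for \cref{alg:gradient_line_search} yields $L_{t+2} \leq L_{t+1}/2 \leq L_t/2$. In \textbf{Subcase B2}, where $x^m_{t+1} \neq x^q_t$, I would run a proportionality argument paralleling \cref{fig:case1,fig:case2,fig:case3}: by convexity the chord through $p^q_t$ and $p^q_{t+1}$ lies below $f$ between them and above $f$ outside, so it plays the role of the tangent at $x^q$ in \gradalg{}'s analysis, and the same similar-triangle bounds that drive halving in the gradient case carry over.

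The main obstacle will be Subcase B2, where the two queries live at distinct locations and only a finite-difference estimate of the gradient is available. The delicate step is to verify that choosing $x^m_{t+1}$ as the midpoint of the asymmetric interval $[x^-_{t+1}, x^+_{t+1}]$, which by Case B contains $x^q_t$ strictly in its interior, always places $p^q_{t+1}$ so that the new chord, combined with the pre-existing secants that defined $x^-_t$ and $x^+_t$, jointly force the endpoints of $\Delta^x(P_{t+2})$ to be within $L_t/2$. I expect this to reduce to the same similar-triangle identities as in \cref{fig:case1,fig:case2,fig:case3}, now applied to the quadrilateral of intersections among the four secants involved, together with the observation that the chord slope lies between the one-sided slopes at its endpoints, so it can only be a more permissive constraint than the actual tangent would be.
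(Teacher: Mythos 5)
Your Case A, and the observation that in Case B the previous query $x^q_t$ must lie strictly inside $\Delta^x(P_{t+1})$ (hence is the new lowest point $\xlow(P_{t+1})$), are both correct and correspond to the paper's own first case split. But the heart of your argument, Subcase B2, is a genuine gap: you defer to ``the same similar-triangle bounds that drive halving in the gradient case,'' while simultaneously --- and correctly --- noting that a chord constraint is \emph{more permissive} than the tangent constraint it approximates. A more permissive constraint yields a \emph{larger} optimality region, so the proportionality identities behind \cref{fig:case1,fig:case2,fig:case3} do not transfer to the secant-based $\Delta(P)$, and the quadrilateral-of-intersections computation you sketch has no reason to close; you flag this yourself as the delicate step but do not resolve it. Subcase B1 has a smaller version of the same problem: the gap-reduction theorem for \cref{alg:gradient_line_search} is a statement about the tangent-based region $\Delta_t$ maintained by that algorithm, and importing its conclusion for the secant-based $\Delta(P_{t+2})$ requires a containment argument you do not supply.

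The missing idea is order-theoretic and makes all slope analysis unnecessary. If $p^k=\plow(P)$ and $p^{k-1},p^{k+1}$ are the queried points adjacent to it, then $\Delta^x(P)\subseteq[x^{k-1},x^{k+1}]$: for $x<x^{k-1}$ the constraints $f^{k-1,k}(x)\le y$ and $y\le y^k$ in \cref{eq:gap} are incompatible because that secant is at least $y^{k-1}\ge y^k$ there, and symmetrically on the right. Now track whether each query's value falls below the running minimum. If $y^q_t\ge\ylow(P_t)$, the old minimum acquires $x^q_t$ as a neighbor (or a closer one), so $\Delta^x(P_{t+1})$ lies in one half of $\Delta^x(P_t)$ and monotonicity finishes. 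Otherwise $x^q_t=\xlow(P_{t+1})$, which is your Case B; if $y^q_{t+1}\ge y^q_t$ the same sandwich applied at $t+1$ gives $|\Delta^x(P_{t+2})|\le\tfrac12|\Delta^x(P_{t+1})|\le\tfrac12|\Delta^x(P_t)|$, and if $y^q_{t+1}<y^q_t$ then $x^q_{t+1}$ is the new minimum with $x^q_t$ as a neighbor, so $\Delta^x(P_{t+2})$ lies entirely on one side of $x^q_t$, the midpoint of $\Delta^x(P_t)$, again giving length at most $\tfrac12|\Delta^x(P_t)|$. This is essentially the paper's proof; no limit argument beyond the coincidence case, and no proportionality, is needed.
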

\begin{proof}
First, observe that for all $t\in\Naturals$, $\Delta(P_{t+1})\subseteq\Delta(P_t)$
since by definition of $\Delta$ in \cref{eq:gap}, $P_{t+1}$ is more constrained than $P_t$.
Also observe that $\plow(P_t) \in \Delta(P_t)$ (see \cref{fig:5point}).

Let $[x^-_t,x^+_t] = \Delta^x(P_t)$,
$x^q_t = \frac12(x^-_t + x^+_t)$ for all $t$.
\todo{figures}

Case 1) If $y^q_t \geq \ylow(P_t)$, 
then necessarily either $x^* \in [x^-_t, x^q_t]$ or $x^* \in [x^q_t,x^+_t]$
--- depending on which segment contains $\xlow(P_{t+1}) = \xlow(P_t)$ ---
and this information is reflected in $\Delta(P_{t+1})$\warning{Need to say how?}.
Thus $|\Delta(P_{t+1})| \leq \max\{x^q_t -x^-_t, x^+_t - x^q_t\} = \frac12|\Delta(P_t)|$.
Hence $|\Delta(P_{t+2})| \leq |\Delta(P_{t+1})| \leq \frac12|\Delta(P_t)|$.

Case 2) Assume instead that $y^q_t \leq \ylow(P_t)$.

Case 2a) At $t+1$, if $y^q_{t+1} \geq \ylow(P_{t+1}) = y^q_t$,
then by case 1) we have $|\Delta_{t+2}|\leq \frac12|\Delta_{t+1}|$,
and thus $|\Delta_{t+2}|\leq \frac12|\Delta_{t}|$.

Case 2b) If instead $y^q_{t+1} \leq \ylow(P_{t+1}) = y^q_t$,
then necessarily either $x^* \in [x^-_t, x^q_t]$ or $x^* \in [x^q_t,x^+_t]$
--- depending on which segment contains $\xlow(P_{t+2}) = x^q_{t+1}$ ---
and this information is reflected in $\Delta(P_{t+2})$.
Then again $\Delta(P_{t+2}) \leq \max\{x^q_t -x^-_t, x^+_t - x^q_t\} = \frac12|\Delta(P_t)|$.
\end{proof}

\warning{consistency: Golden-section search: caps? GSS? }
\newcommand{\Pgss}{P^{\text{GSS}}}
\begin{remark}[Golden-section search]
Let $\Pgss_t$ be the set of all points queried by Golden-section search
on a convex function $f$ up to step $t$.
Then, as mentioned earlier,
  Golden-section search is only guaranteed to reduce the $x$-\emph{interval}
  $x^{k-1}_t-x^{k+1}_t$ around the current minimum $x^k_t=\xlow(\Pgss_t)$
  (with $x^{k-1}_t, x^k_t, x^{k+1}\in \Pgss_t$)
  by a factor $(1+\sqrt{5})/2\approx 1.62$ at every iteration,
  but this does not necessarily lead to a strict reduction of the $x^*$-gap
  $\Delta^x(\Pgss_t)$
  at every (second) iteration.
  In fact, this may not even reduce $\Delta^x(\Pgss_t)$ for many iterations.  
  Consider for example the function $f(x)=\sqrt{1+x^2}$ for $[\xleft,\xright]=[-1000, 900]$.
  This function behaves like $|x|$ outside of $[-1, 1]$, and like $x^2$ inside $[-1, 1]$.
  After 4 iterations of \nogradalg{}, 
  the $x^*$-gap is reduced to less than 2,
  while it takes 13 iterations for GSS to reach the same $\Delta^x$ precision.
\end{remark}

\subsection{Quasi-exact line search for learning rate tuning}

In this section we consider optimizing a multidimensional function $f:\Reals^d\to\Reals$,
using a base algorithm such as Gradient Descent (GD) where the learning rate is tuned
at each step using a line search.
The typical line search algorithm for such cases is a backtracking line search with Armijo's stopping condition~\citep{armijo1966minimization},
which is called an inexact line search~\citep{boyd2004convex}.

\citet{boyd2004convex} mentions that people have stopped using exact line searches in favor of backtracking line search due to the former being very 
expensive.

Using the $y^*$-gap $\Delta^y$, we change the stopping criterion of \nogradalg{} so as to be suitable for tuning learning rates,
and prove convergence guarantees when combined with GD or Frank-Wolfe.
We call this type of line search algorithm a \emph{quasi-exact line search}.

We also show that Armijo's condition can lead to poor performance even on simple cases such as a quadratic and an exponential, and that neither large nor small values of the stopping criterion parameter are universally suitable.
By contrast, a quasi-exact line search appears significantly more stable for a wide range of values of its single parameter.

More precisely, at gradient descent iteration $k$, the current point being $x_k\in\Reals^d$,
define the 1-d convex function $\tilde f:[0,\infty)\to\Reals$:
\begin{align*}
    \tilde f(\alpha) = f(x_k - \alpha \nabla f(x_k))\,.
\end{align*}
It is known that there exists an $\alpha' > 0$ such that for all $\alpha\in[0,\alpha']$, $\tilde f(\alpha) < \tilde f(0)$ if the function is smooth (see \cref{sec:notation}).
The quasi-exact line search runs \nogradalg{} until the
$y^*$-gap (which is an upper bound on $\ylow(P_t) - y^*$)
is smaller than a constant fraction of the current improvement from the first queried point $\yleft(P_t) - \ylow(P_t)$:
\begin{align}\label{eq:quasi-exact-condition}
    \yleft(P_t) - \ylow(P_t) \geq c |\Delta^y(P_t)|
\end{align}
for the algorithm parameter $c > 0$.
See \code{quasi_exact_line_search} in \cref{alg:gd_quasi-exact}.
Moreover, if the point returned by \nogradalg{} is at the end of the original interval,
the interval size is increased by a factor 4  and \nogradalg{} is run again,
and so on until the returned point is not at the boundary. This scheme assumes that
$\tilde f(\alpha)$ eventually increases with $\alpha$, which applies in particular to strongly-convex functions (see \cref{sec:notation}).
\code{quasi_exact_line_search} can also use an initial query $\alpha_{\text{prev}}$ which defaults to 1.
We also propose a version of GD+Backtracking that also increases the interval size as required, see \cref{alg:gd_backtracking} --- also see Unbounded Backtracking~\citep{truong2021backtracking}.

We show in \cref{thm:quasi} that the terminating condition \cref{eq:quasi-exact-condition} incurs a slowdown
factor of $c/(c+1)$ compared to an exact line search ($c=\infty$),
in terms of the number of gradient descent steps.
Note that the conditions for \cref{thm:quasi} are weaker than smoothness and strong-convexity.

\begin{theorem}[Quasi-exact line search]\label{thm:quasi}
Let $\tilde f:[0,\infty)\to\Reals$ be a convex function.
Run \code{quasi_exact_line_search} (\cref{alg:gd_quasi-exact})
on $\tilde f(\cdot)$ and $c > 0$ and collect the return value $\alpha_c$.
Let $\alpha_\infty \in \argmin_{\alpha\geq 0}\tilde f(\alpha)$
and assume $\alpha_\infty < \infty$ and that $f(\infty) > f(\alpha_{\infty})$.
Then 
\begin{align*}
    \tilde f(0) - \tilde f(\alpha_c) \geq \frac{c}{c+1}\left(\tilde f(0) - \tilde f(\alpha_\infty)\right)\,. 
    &\qedhere
\end{align*}
\end{theorem}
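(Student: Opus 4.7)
The plan is to exploit the algorithm's termination condition together with the optimality-region guarantee from \cref{thm:gap}, and then do a simple algebraic rearrangement. The whole argument is essentially two inequalities chained together.

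First I would verify that \code{quasi\_exact\_line\_search} actually terminates with $\alpha_c$ strictly inside its final interval $[\alpha^{\text{left}}, \alpha^{\text{right}}]$. The outer ``grow by a factor 4'' loop keeps enlarging $\alpha^{\text{right}}$ whenever the returned point sits at the boundary, and the assumptions $\alpha_\infty < \infty$ and $\tilde f(\infty) > \tilde f(\alpha_\infty)$ (together with convexity) ensure that after finitely many doublings we have $\alpha^{\text{right}}$ large enough that $\tilde f(\alpha^{\text{right}}) > \tilde f(\alpha_\infty)$, so the minimum over the (closed) search interval is interior and \nogradalg{} terminates with $\ylow(P_t)$ achieved at an interior point. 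Inside that call, \nogradalg{} terminates by the stopping test \code{stop\_when}, i.e.\ condition~\cref{eq:quasi-exact-condition}.

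Next I would apply \cref{thm:gap} to the terminal point set $P_t$: the set $P_t$ is $(\tilde f, \alpha^{\text{left}}, \alpha^{\text{right}})$-convex by construction, and since $\alpha_\infty$ minimizes $\tilde f$ globally and lies inside the final interval, it is also a minimizer over $[\alpha^{\text{left}}, \alpha^{\text{right}}]$. Hence $(\alpha_\infty, \tilde f(\alpha_\infty)) \in \Delta(P_t)$, so $\tilde f(\alpha_\infty) \in \Delta^y(P_t)$. Since $\ylow(P_t) = \tilde f(\alpha_c)$ is the upper endpoint of $\Delta^y(P_t)$, this gives
\begin{align*}
|\Delta^y(P_t)| \;\geq\; \ylow(P_t) - \tilde f(\alpha_\infty) \;=\; \tilde f(\alpha_c) - \tilde f(\alpha_\infty).
\end{align*}

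Combining this with the termination condition $\yleft(P_t) - \ylow(P_t) \geq c\,|\Delta^y(P_t)|$, and noting $\yleft(P_t) = \tilde f(0)$ since $0$ is the left endpoint that was queried first, I obtain
\begin{align*}
\tilde f(0) - \tilde f(\alpha_c) \;\geq\; c\bigl(\tilde f(\alpha_c) - \tilde f(\alpha_\infty)\bigr).
\end{align*}
Adding $c\bigl(\tilde f(0) - \tilde f(\alpha_c)\bigr)$ to both sides and dividing by $c+1$ yields exactly
\begin{align*}
\tilde f(0) - \tilde f(\alpha_c) \;\geq\; \tfrac{c}{c+1}\bigl(\tilde f(0) - \tilde f(\alpha_\infty)\bigr),
\end{align*}
which is the claim.

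The only delicate point is really the termination/boundary argument in the first step; once we have $\alpha_c$ interior and the stopping condition holds, the remainder is a two-line calculation. I would therefore spend most of the write-up carefully justifying that the outer interval-doubling loop ends in finitely many rounds under the stated assumptions, and then state the algebraic chain above as the core of the proof.
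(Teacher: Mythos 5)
Your proposal is correct and follows essentially the same route as the paper's own proof: establish termination of the interval-growing loop from the assumptions $\alpha_\infty<\infty$ and $\tilde f(\infty)>\tilde f(\alpha_\infty)$, apply \cref{thm:gap} to get $|\Delta^y(P)|\geq \tilde f(\alpha_c)-\tilde f(\alpha_\infty)$, chain this with the stopping condition \cref{eq:quasi-exact-condition}, and rearrange by adding $c(\tilde f(0)-\tilde f(\alpha_c))$ to both sides. The only difference is that you spell out the termination and boundary argument in more detail than the paper, which dispatches it in one sentence.
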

\begin{proof}
First, since $\alpha_{\infty} < \infty$ and $f(\infty) > f(\alpha_{\infty})$, there there exists an iteration $i$ of \code{quasi_exact_line_search} at which \nogradalg{}\code{_core} returns with $\xlow_i(P) < \xright_i$.

Let $P$ be the points returned by \nogradalg{} at the last iteration of 
\code{quasi_exact_line_search}.
From \cref{thm:gap} we know that
$|\Delta^y(P)| \geq \ylow(P) - \tilde f(\alpha_\infty) = \tilde f(\alpha_c) - \tilde f (\alpha_\infty)$.
Thus by \cref{eq:quasi-exact-condition}, 
\begin{align*}
    \tilde f(0) -\tilde f(\alpha_c) ~\equiv~ \yleft(P) - \ylow(P) ~\geq~ c |\Delta^y(P)| ~\geq~ c(\tilde f(\alpha_c) - \tilde f(\alpha_\infty))\,.
\end{align*}
Adding $c\tilde f(0) -c\tilde f(\alpha_c)$ on each side we obtain
\begin{align*}
    (c+1)(\tilde f(0)-\tilde f (\alpha_c)) \geq c(\tilde f(0) -\tilde f(\alpha_\infty))
\end{align*}
and the result follows by dividing both sides by $c+1$.
\end{proof}

We now prove convergence guarantees for  Gradient Descent with a quasi-exact line search,
using a straightforward adaptation of the proof by \citet[Section 9.3.1]{boyd2004convex}.

\begin{algorithm}
\caption{{\bf (Gradient descent with quasi-exact line search)} for strongly convex and smooth functions for unbounded domains.
The line search is repeated with an increased range
as long as the minimum is at the right boundary.
This allows to adapt to the smoothness parameter $M$ without having to know it, even if $M < 1$.
Note that \nogradalg{} is very fast when the minimum is at a boundary, in particular since points of the previous iteration are reused.
The previously found learning rate $\alpha_{\text{prev}}$ is also used as a starting point for the line search.
See \cref{sec:stability} for  numerical stability.\mh{I would rename $\alpha_{\text{prev}}$ and $\alpha_c$ both to $\alpha$ and remove the last line of algorithm.}
\mh{quasi\_exact\_line\_search is supposed to return $\alpha_c$. Does it?}
\warning{$\alpha_prev$ used int tetxt too?}}
\label{alg:gd_quasi-exact}
\begin{lstlisting}[language=python,escapeinside={(*}{*)}]
def quasi_exact_line_search($\tilde f, c, \alpha_{\text{prev}}$):
  # stopping criterion of (*\lstcommentcolor{\cref{eq:quasi-exact-condition}}*)
  def stop_when(P): return $c|\Delta^y(P)| \leq \tilde f(0) - \ylow(P)$
  
  $P_0 = \{(0, \tilde f(0))\}$
  for i = 0, 1, 2, ...:
    $\xright = \alpha_{\text{prev}}4^i$
    # Re-use previous points and add the new upper bound, 
    # which forces $\lstcommentcolor{\Delta^x(P'_i) > \Delta^x(P_i)}$ by (*\lstcommentcolor{\cref{eq:gap}}*)
    $P'_i = P_i \cup \{(\xright, f(\xright))\}$
    
    $P_{i+1}$ = (*\nogradalg{}*)_core($\tilde f, P'_i$, stop_when)
    if $\xlow(P_i) < \xright$: # minimum not at the rightmost boundary
      return $\ylow(P_i)$ 

def GD_quasi_exact($f$, $x_0$, $c=1$): 
  $x_1$ = $x_0$ 
  $\alpha_{\text{prev}} = 1$ # previous learning rate
  for $k$ = 1, 2, ...:
    def $\tilde f(\alpha)$: return $f(x_k - \alpha\nabla f(x_k))$
      
    $\alpha_c$ = quasi_exact_line_search($\tilde f, c, \alpha_{\text{prev}}$)
    $x_{k+1} = x_k - \alpha_c \nabla f(x_k)$
    $\alpha_{\text{prev}} \leftarrow \alpha_c$
\end{lstlisting}
\end{algorithm}

\begin{algorithm}
\caption{{\bf (Gradient Descent with backtracking line search)} for strongly convex and smooth functions for unbounded domains
The line search is repeated with an increased range
as long as the minimum is at the right boundary.
This allows to adapt to the smoothness parameter $M$ without having to know it, even if $M < 1$.
The previously found learning rate $\alpha_{\text{prev}}$ is also used as a starting point for the line search.
See also \citet{truong2021backtracking}.}
\label{alg:gd_backtracking}
\begin{lstlisting}[language=python,escapeinside={(*}{*)}]
def backtracking($\tilde f, G, \tau, \eps, \alpha_{\text{prev}} $):
  $\alpha$ = $\alpha_{\text{prev}}$
  while True:
    i= 0
    while $\tilde f(0) - \tilde f(\alpha) < \eps \alpha G$:
      $\alpha \leftarrow \alpha\tau$
      i += 1
    if i == 1:  # $\alpha$ possibly too small
      $\alpha \leftarrow 4\alpha$
    else:
      return $\alpha$

def GD_backtracking($f$, $x_0$, $\tau=\frac12$, $\eps$): 
  $x$ = $x_0$
  $\alpha_{\text{prev}} = 1$
  for $k$ = 1, 2, ...:
    def $\tilde f(\alpha)$: return $f(x_k - \alpha\nabla f(x_k))$
      
    $\alpha$ = backtracking($\tilde f, \|\nabla f(x_k)\|_2^2, \tau, \eps, \alpha_{\text{prev}}$)
    $x_{k+1} = x_k - \alpha \nabla f(x_k)$
    $\alpha_{\text{prev}} \leftarrow \alpha$
\end{lstlisting}
\end{algorithm}

\begin{theorem}[GD with quasi-exact line search]
Consider an $m$-strongly convex and $M$-smooth function $f: \Reals^d\to \Reals$
with (unique) minimizer $x^*\in\Reals^d$.
Starting at $x_0\in\Reals^d$,
after $k$ iterations of gradient descent with quasi-exact line search
(\cref{alg:gd_quasi-exact}),
we have 
\begin{align*}
    f(x_{k+1}) - f(x^*) \leq (f(x_1) - f(x^*))\left(1-\frac{c}{c+1}\frac{m}{M}\right)^k\,.
    &\qedhere
\end{align*}
\end{theorem}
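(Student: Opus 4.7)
The plan is to adapt the classical exact-line-search analysis of GD on strongly convex and smooth functions (Boyd \& Vandenberghe, Section 9.3.1), inserting the quasi-exact slowdown factor from \cref{thm:quasi} at the appropriate step.

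First I would fix an iteration $k$, set $\tilde f(\alpha) = f(x_k - \alpha\nabla f(x_k))$, and apply $M$-smoothness along the descent ray to obtain
\begin{align*}
\tilde f(\alpha) \leq \tilde f(0) - \alpha \|\nabla f(x_k)\|_2^2 + \tfrac{M}{2}\alpha^2 \|\nabla f(x_k)\|_2^2\,.
\end{align*}
Minimizing the right-hand side at $\alpha = 1/M$ yields $\tilde f(1/M) \leq \tilde f(0) - \tfrac{1}{2M}\|\nabla f(x_k)\|_2^2$. Since the exact minimizer $\alpha_\infty$ satisfies $\tilde f(\alpha_\infty) \leq \tilde f(1/M)$, this gives the ``ideal'' improvement bound $\tilde f(0) - \tilde f(\alpha_\infty) \geq \tfrac{1}{2M}\|\nabla f(x_k)\|_2^2$.

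Next I would invoke \cref{thm:quasi}. Its hypotheses $\alpha_\infty < \infty$ and $\tilde f(\infty) > \tilde f(\alpha_\infty)$ hold because $\tilde f$ is $m\|\nabla f(x_k)\|_2^2$-strongly convex along the ray (assuming $\nabla f(x_k) \neq 0$; otherwise $x_k = x^*$ and the claim is trivial), so $\tilde f$ has a unique finite minimizer and grows to $\infty$. This gives
\begin{align*}
f(x_k) - f(x_{k+1}) ~=~ \tilde f(0) - \tilde f(\alpha_c) ~\geq~ \tfrac{c}{c+1}\cdot\tfrac{1}{2M}\|\nabla f(x_k)\|_2^2\,.
\end{align*}
Then I would combine this with the standard Polyak--\L{}ojasiewicz inequality derived from $m$-strong convexity, $\|\nabla f(x_k)\|_2^2 \geq 2m(f(x_k) - f(x^*))$, to obtain
\begin{align*}
f(x_{k+1}) - f(x^*) \leq \left(1 - \tfrac{c}{c+1}\cdot\tfrac{m}{M}\right)(f(x_k) - f(x^*))\,.
\end{align*}
Iterating this contraction $k$ times starting from $x_1$ yields the stated bound.

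The main obstacle is confirming that the outer loop of the quasi-exact line search in \cref{alg:gd_quasi-exact} actually terminates with an $\alpha_c$ to which \cref{thm:quasi} applies: the interval-doubling scheme must eventually bracket $\alpha_\infty$ strictly in the interior. This again follows from $m$-strong convexity of $\tilde f$ along the ray, which forces $\tilde f(\alpha)\to\infty$ as $\alpha\to\infty$, so after finitely many expansions the returned $\xlow(P)$ is strictly below the right boundary. Beyond that technicality the proof is a short chain of standard inequalities; the only bookkeeping to mind is that the statement counts the $k$-th iterate as $x_{k+1}$ with $x_1$ as starting point, matching the for-loop index in the algorithm.
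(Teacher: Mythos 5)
Your proposal is correct and follows essentially the same route as the paper's proof: the exact-line-search improvement bound $\tilde f(0)-\tilde f(\alpha_\infty)\geq\frac{1}{2M}\|\nabla f(x_k)\|_2^2$ from smoothness, the $\frac{c}{c+1}$ slowdown from \cref{thm:quasi}, the inequality $\|\nabla f(x_k)\|_2^2\geq 2m(f(x_k)-f(x^*))$ from strong convexity, and iteration of the resulting contraction. Your version is slightly more careful than the paper's in that you explicitly verify the hypotheses of \cref{thm:quasi} along the ray and handle the $\nabla f(x_k)=0$ case, but the argument is the same.
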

\begin{proof}
First, recall that strong convexity implies that the minimum $x^*$ is unique.
At iteration $k$, let 
$\alpha^\infty_k = \argmin_{\alpha \geq 0} f(x_k-\alpha \nabla f(x_k))$,
and let 
$x^\infty_k = x - \alpha^\infty_k \nabla f(x)$.
From \citet{boyd2004convex}, Chapter 9.3, at some gradient iteration $k$:
\begin{align*}
    f(x^\infty_k) \leq f(x_k) -\frac{1}{2M}\|\nabla f(x_k)\|^2\,.
\end{align*}
Together with \cref{thm:quasi},
\begin{align*}
    f(x_{k}) - f(x_{k+1}) \geq \frac{c}{c+1}\left(f(x_k) -f(x^\infty_k) \right)
    \geq \frac{c}{c+1}\frac{1}{2M}\|f(x_k)\|^2
\end{align*}
Rearranging and subtracting $f(x^*)$ on both sides,
\begin{align*}
    f(x_{k+1}) - f(x^*) \leq f(x_k) - f(x^*) - \frac{c}{c+1}\frac{1}{2M}\|f(x_k)\|^2
\end{align*}
Following \citet{boyd2004convex} again, we combine with $\|\nabla f(x_k)\|^2 \geq 2m(f(x_k) - f(x^*))$,
\begin{align*}
    f(x_{k+1}) - f(x^*) &\leq (f(x_k) - f(x^*))\left(1- \frac{c}{c+1}\frac{m}{M}\right) \\
    &\leq  \dots \\
    &\leq (f(x_1) - f(x^*))\left(1- \frac{c}{c+1}\frac{m}{M}\right)^k\,.
    \qedhere
\end{align*}
\end{proof}
Note that this implies 
\begin{align*}
f(x_{k+1}) - f(x^*)&\leq (f(x_1) - f(x^*))\exp\left(- k\frac{c}{c+1}\frac{m}{M}\right)\,, 
\end{align*}
which suggests that with the default value $c=1$,
the quasi-exact line search is at most a factor 2 slower (in the number of iterations $k$) than using an exact line search, while being significantly more efficient in the number of function queries.
 
\begin{remark}[Initial gradient]
\cref{alg:gd_quasi-exact} can also be modified to make use of the known initial gradient, by adding two virtual points at $-1$ and $-2$ for example.
The gradient information tells us that
\begin{align*}
    f(x - \alpha \nabla f(x)) \geq f(x) - \alpha \|\nabla f(x)\|_2^2\,,
\end{align*}
that is, $\tilde f(\alpha) \geq \tilde f(0) - \alpha \|\nabla f(x)\|_2^2$.
Hence, at gradient descent iteration $k$, we can augment the initial set of points $P_0$ of \code{quasi_exact_line_search} in \cref{alg:gd_quasi-exact}
with $\{(-2, f(x_k) + 2\|\nabla f(x_k)\|_2^2), (-1, f(x_k) + \|\nabla f(x_k)\|_2^2)\}$.
See \cref{sec:stability}, however.
\end{remark}

\subsection{Comparison with backtracking line search}

The backtracking line search with Armijo's stopping condition \citep{armijo1966minimization} is an appealing algorithm:
it is simple to implement, has good convergence guarantees, and works well in practice.
Armijo's condition stops the backtracking line search at the first iteration $t$ such that
\begin{align}\label{eq:armijo}
    f(x) - f(x - \tau^t\nabla f(x)) \geq \eps\tau^t \|\nabla f(x)\|_2^2
\end{align}
where $\tau\in(0,1)$ is the exponential decrease factor of the learning rate,
and $\eps\in(0,1)$ is the `acceptable linearity' parameter.

\citet{boyd2004convex} provide the following bound for GD with backtracking line search.
We slightly modify the traditional backtracking line search to extend the interval and start again whenever the returned value is at the boundary. See \cref{alg:gd_backtracking},
but compare \citet{truong2021backtracking} who define similar algorithms.
Then, if $f$ is an $m$-strongly convex and $M$-smooth function, then GD+backtracking converges linearly in unconstrained optimization:
\begin{align*}
    f(x_{k+1}) - f(x^*) \leq (f(x_1) - f(x^*))\left(1- 2\tau\eps\frac{m}{M}\right)^k
    \leq (f(x_1) - f(x^*))\exp \left(-2k\tau\eps\frac{m}{M}\right)\,.
\end{align*}
The proof is a simple adaptation of the standard proof~\citep{boyd2004convex}.
In what follows we always use $\tau=1/2$, which strikes a good balance between practical speed and theoretical analysis, as it leads to a learning rate that is always within a factor 2 of the optimal learning rate that satisfies Armijo's condition.
If $\alpha_\eps$ is the optimal learning rate for Armijo's condition for a fixed $\eps$, only $\log (1/\alpha_\eps)/\log (1/\tau)$ iterations are needed for the line search to stop within a factor $\tau$ of $\alpha_\eps$.
\warning{Yet again another notation for the optimal learning rate!}
While taking $\tau \ll 1/2$ may sometimes speed up the line search by a small log factor, it also degrades the guarantee by a much larger linear factor:
\eg if $\tau=1/1000$, this can speed up the search by a factor 
$\log 1000/\log 2 < 10$,
but the convergence speed guarantee degrades by a factor 500(!)
While setting $\tau=1/2$ is a good universal choice, finding the right value for $\eps$ is another matter altogether, as we show in this section.
The closer $\eps$ is to 1, the more we require the line between $(x_k, f(x_k))$ and 
$(x_{k+1}, f(x_k+1))$ to be close to the tangent of $f$ at $x_k$,
at GD iteration $k$.
This means that many line search iterations may be required to find a point that is close enough to the tangent, and in practice a small value of $\eps$ is chosen ---
\citet{boyd2004convex} mention $\eps\in[0.01, 0.3]$.
But even taking $\eps = 0.1$ `slows down' the convergence guarantee by a factor 10.

The following example shows (as is well-known) that small values of $\eps$ can make the algorithm sluggish even for quadratic functions --- the `nicest' class of functions for convex minimization, where $M=m$.

\begin{example}[Armijo's flying squirrel]\label{ex:armijos_flying_squirrel}
Take $f(x)= ax^2$.
Then,
\begin{align*}
    f(x) - f(x- \tau^t 2ax)
    = ax^2  - ax^2(1- \tau^t 2a)^2
    =  4a^2x^2\tau^t(1- \tau^t a) \,.
\end{align*}
Hence, Armijo's condition \cref{eq:armijo} is satisfied for the smallest integer $t \geq 0$ where
\begin{align*}
    4a^2x^2\tau^t(1- \tau^t a) &\geq \eps\tau^t \|\nabla f(x)\|^2 = 4\eps\tau^ta^2 x^2 \,,\\
    \Leftrightarrow
    1- \tau^t a &\geq \eps\,.
\end{align*}
and the condition must also \emph{not} be satisfied at $t-1$:
\begin{align*}
    4a^2x^2\tau^{t-1}(1- \tau^{t-1} a) &< \eps\tau^{t-1} \|\nabla f(x)\|^2 = 4\eps\tau^{t-1}a^2 x^2
    \,,\\
    \Leftrightarrow
    1- \tau^{t-1} a &< \eps\,.
\end{align*}
Take $a= \tau^{-t'}(1-\eps)$, with $t'\geq 0$ integer and $\eps \in (0, 1)$.
Then it can be verified that the two conditions above are satisfied for $t=t'$.
Furthermore, $x-\tau^t \nabla(x) = x- \tau^{t} 2ax = - x(1-\eps)$ for $t=t'$,
which means that at iteration $k$ of GD with backtracking,
we have $x_{k+1} = - (1-\eps)x_k$.
That is, $x_k$ constantly jumps between the left-hand side and right-hand side branches of the quadratic function --- like an over-excited flying squirrel --- getting closer to 0 only by factor $1-\eps$ at each iteration.

The number of iterations it takes before $|x_k| = |x_1|(1-\eps)^k \leq 1$
is linear with $1/\eps$ for $k \leq 1/\eps$.
This means that for small values of $\eps$, a GD with backtracking with parameters $\tau$ and $\eps$ takes linear time on the $x$-axis in $1/\eps$ to get close to 0.
For large $\eps$ such as $\eps=1/2$, this is all fine,
but for small $\eps$ such as $\eps=1/100$ this becomes unacceptable.
Note that the problem is not linked to $\tau$, or to the line search being bounded in $[0,1]$.
See \cref{tab:gd_line_search} for experimental results confirming this analysis.
\end{example}

The previous example shows that taking a small value for $\eps$ can lead to slow progress. The next example shows exactly the opposite: that large values of $\eps$ can also lead to slower progress than necessary ---
see also the example from Equation (9.20) from \citet{boyd2004convex}.

\begin{example}[Armijo's snowplow skier]\label{ex:armijos_sloth}
Consider $f(x)=e^{ax}$ for $a \geq 1$.
Define $g(x) = \nabla f(x)=a e^{ax}$, then the backtracking line search stops 
at step $t^*$ such that
(see \cref{eq:armijo})
\begin{align*}
    t^* &= \min\left\{t\in\Nonnegints:f(x) - f(x - \tau^tg(x)) \geq \eps \tau^t g(x)^2\right\} \\
    &= 
    \min\left\{t\in\Nonnegints:
    f(x)(1 - \exp(-a\tau^t g(x))) \geq \eps \tau^tg(x)^2
    \right\} \\
    &=\min\left\{t\in\Nonnegints:
    1 - \exp(-a\tau^t g(x)) \geq a \eps \tau^t g(x)
    \right\} \\
    &\geq \min\left\{t\in\Nonnegints:
    1  \geq a \eps \tau^t g(x)
    \right\}\\
    &= \max\left\{0, \ceil{\frac{ax+ \ln(a\eps)}{\ln 1/\tau}}\right\}\,.
\end{align*}
and since $1 \geq a \eps \tau^{t^*} g(x)$ (from line of the inequality in the above display), it follows that 
$\tau^{t^*}g(x) \leq 1/(a\eps)$, and so
$x - \tau^{t*}g(x) \geq x-1/(a\eps)$.
Moreover, each GD iteration $k$ takes at least $ax_k/\ln 1/\tau$ backtracking iterations
leading to at least a quadratic number $a^2\eps x^2/(2\ln 1/\tau)$ of queries in total!

On this very `curvy' function, Armijo's condition forces the backtracking line search to return a point that descends the slope extremely cautiously, despite the function being queried at much lower points during the line search.

For example, if $x_0 = 100$, $\eps=1/2$, $a=4$,
then it takes GD+backtracking more than 200 GD iterations before $x_k < 0$,
and a total of at least 55\,000 function queries.
By contrast to \cref{ex:armijos_flying_squirrel}, here a larger $\eps$ makes the matter worse.
\end{example}

See \cref{tab:gd_line_search} for
some experimental results, comparing backtracking line search and quasi-exact line search.

\newcommand{\bfred}[1]{\textcolor{red}{\textbf{#1}}}

\begin{table}[htb!]
    \centering
    \begin{tabular}{l|rr|rr||rr}
     & \multicolumn{2}{c|}{$f(x)=3.95x^2$}
     & \multicolumn{2}{c||}{$f(x)=e^{3x}+e^{-3x}$}
     & \multicolumn{2}{c}{$f(x,y)=x^4+y^4$} \\
    Line search type & GD iter. & Queries & GD iter. & Queries & GD iter. & Queries \\
    \hline
quasi-exact $c=100{}$       & 4             & 45    & 6             & 954     & 4             & 57      \\
quasi-exact $c=10{}$        & 4             & 35    & 7             & 957     & 4             & 49      \\
quasi-exact $c=4{}$         & 4             & 30    & 8             & 959     & 4             & 44      \\
quasi-exact $c=2{}$         & 4             & 25    & 8             & 955     & 5             & 44      \\
quasi-exact $c=1{}$         & 4             & 25    & 10            & 965     & 5             & 42      \\
quasi-exact $c=0.5{}$       & 4             & 25    & 10            & 965     & 5             & 38      \\
quasi-exact $c=0.1{}$       & 4             & 25    & 44            & 1090    & 7            & 43      \\
quasi-exact $c=0.01{}$      & 754           & 2265  & 44            & 1089    & 7            & 44      \\
backtracking $\eps=0.8{}$   & 67            & 476   & 898           & 193817  & 74 (17\,700)         & 388 (35662)   \\
backtracking $\eps=0.5{}$   & 4             & 25    & 229           & 48266   & 24 (17\,674)         & 118 (35416)   \\
backtracking $\eps=0.3{}$   & 4             & 25    & 147           & 31486   & 10 (17\,669)         & 68 (35\,376)   \\
backtracking $\eps=0.1{}$   & 4             & 25    & 47            & 9613    & 5 (17\,655)         & 39 (35\,327)   \\
backtracking $\eps=0.01{}$  & 754           & 3020  & 13            & 1376    & 8 (17\,662)         & 54 (35\,344)   \\
backtracking $\eps=0.001{}$ & 754           & 3020  & 10            & 867     & 8 (17\,662)         & 54 (35\,344)   \\
    \end{tabular}
    \caption{{\bf (Experimental results comparing backtracking line search and quasi-exact line search)}
    \todo{Use latest numbers. Reusing the previous learning rate helps}
    Number of GD iterations, and number of function+gradient queries for quasi-exact line search and backtracking line search ($\tau=1/2$) for various values of the terminating condition parameters $c$ and $\eps$, until $f(x_t) - f(x^*) \leq 10^{-10}$
    for different functions.
    For $f(x)=3.95x^2$ with $x_0=1000$, small values of $\eps$ make the backtracking line search slow (see \cref{ex:armijos_flying_squirrel}).
    \warning{Re-using the previous learning rate is far more efficient!}
    For $f(x)=e^{3x} + e^{-3x}$ with $x_0=100$, large values of $\eps$ make the backtracking line search slow (see \cref{ex:armijos_sloth}).
    By contrast, quasi-exact line search is fast for a wide range of values.
    For $f(x,y)=x^4+y^4$ with $(x_0,y_0)=(0.1, 15)$,
    for which the global minimum is degenerate (Example 3.7 from  \citet{truong2021backtracking}),
    standard backtracking line search (numbers in parenthesis) is stuck with a far too small learning rate,
    impeding progress. 
    Backtracking line search as in \cref{alg:gd_quasi-exact}
    and quasi-exact line search as in \cref{alg:gd_quasi-exact}
    avoid this particular issue.
    See also \citet{truong2021backtracking} for other variants.
    In line with \cref{thm:quasi}, taking $c=0.01$ for quasi-exact line search can incur a slowdown factor of $c/(c+1)\approx 0.01$ compared to an exact line search, and we can indeed observe on the first function that performance is poor.
    The table shows that there is no good default setting for backtracking line search.
    By contrast, quasi-exact line search is robust across a wide range of parameter values.
    We recommend taking $c=1$ to balance practical speed and tight theoretical guarantees.
    }
    \label{tab:gd_line_search}
\end{table}

\paragraph{Many-dimensional optimization.}
We also compare the algorithms on an artificial\mh{proto-typical} loss function \mh{used in Levin-Tree search?} of the form:
\begin{align}\label{eq:logistic_loss}
    L(x) = \left[\prod_{i=1}^N \frac{\exp(a_{i,n_i} x_{n_i})}{\sum_{j \in [d]} \exp(a_{i,j} x_j)}\right]^{-1}
\end{align}
with the parameters $x_0$ initialized uniformly randomly in $[-20, 20]^d$,
$a_{i,j}\in[-1,1]$ drawn uniformly randomly,
and $N=10$,
$d=100$,
and each $n_i\in[d]$ sampled uniformly.
We use Gradient Descent with line search and stop after 2000 (function+gradient) queries. 
\cref{fig:logistic_loss} shows the sensitivity of GD with Backtracking to the $\eps$ parameter, while GD with quasi-exact line search performs well a wide range of the parameter $c$.
Indeed, the exponential curvature makes it difficult for Armijo's condition with large parameter $\eps$ to return a value within only few iterations.

\begin{figure}
    \centering
    \includegraphics[width=\textwidth]{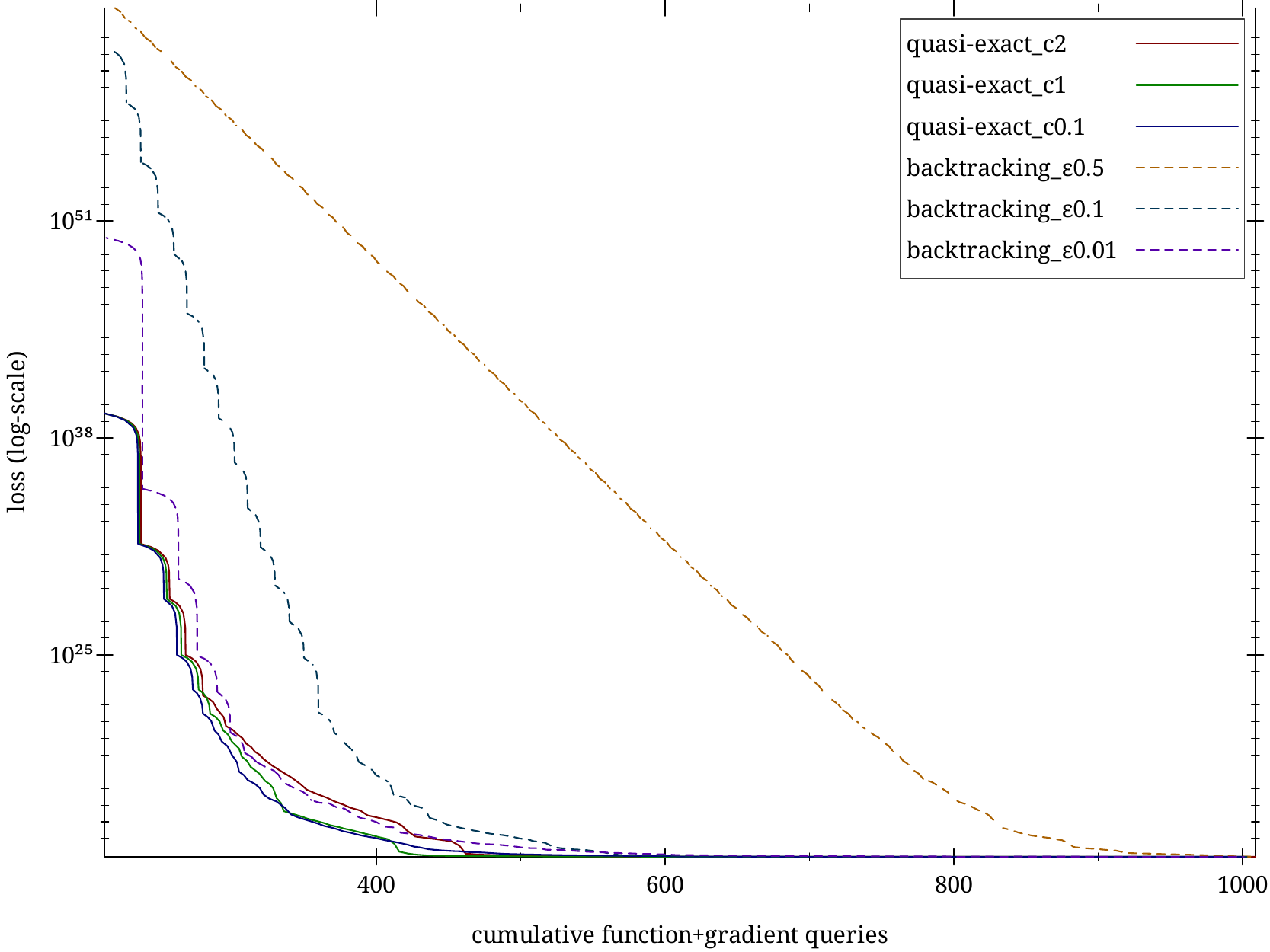}
    \caption{
    {\bf (Loss per function+gradient query for gradient descent with quasi-exact line search
    (\cref{alg:gd_quasi-exact})
    and backtracking line search
    (\cref{alg:gd_backtracking})
    for the loss function of \cref{eq:logistic_loss})}
    The $x$-axis is strongly correlated with computation time.
    This figure illustrates a typical example.
    Due to exponentials, $\eps\geq0.1$ is too conservative for backtracking line search (see \cref{ex:armijos_sloth}). 
    By contrast, quasi-exact line search is robust to various choices of $c$, and  the default $c=1$ appears to be a safe choice.
    }
    \label{fig:logistic_loss}
\end{figure}

\subsection{Frank Wolfe with quasi-exact line search}

\comment{See also ``How Does Momentum Help Frank Wolfe?'' \url{https://ieeexplore.ieee.org/stamp/stamp.jsp?tp=&arnumber=9457128}}
\comment{Also \url{https://proceedings.neurips.cc/paper/2015/file/c058f544c737782deacefa532d9add4c-Paper.pdf}}

A quasi-exact line search fits very well with Frank-Wolfe (FW) \citep{frank1956quadratic},
for a convex and smooth function $f:\mathcal{X}\to \Reals$
over a bounded convex domain $\mathcal{X}\subseteq\Reals^d$.
At FW iteration $k$, 
define $\tilde f(\alpha) = f((1-\alpha)x_k + \alpha s_k)$,
where $s_k = \argmin_{x\in\mathcal{X}} \innerprod{x,\nabla f(x_k)}$.
By contrast to \cref{alg:gd_quasi-exact} (with GD),
the range $[\xleft,\xright]$ should be fixed at $[0, 1]$ and not dynamically increased.

By making sure that the first line search query of iteration $k$ is made at $\alpha=2/(k+2)$,
the $O(1/k)$ convergence proof of Frank-Wolfe with a quasi-exact line search goes through straightforwardly~\citep{jaggi2013duality},
since the returned point will necessarily have a lower $y$-value than $\tilde f(2/(k+2))$.

Note that this is not the case for the learning rate that satisfies Armijo's condition in \cref{eq:armijo}.

\todo{Though an exact line search is likely to provide better guarantees 
in some circumstances.
Then we could provide a close bound.}

\section{Conclusion}

\gradalg{} and \nogradalg{} are two algorithms that perform
exact convex line search, the former using gradients while the latter uses only function queries.
These algorithms come with a guarantee on the optimality/duality gap.
We use this guarantee can as a stopping criterion to develop a quasi-exact line search and prove convergence guarantees that are within a factor $c/(c+1)$ of the convergence speed of an exact line search (for which $c=\infty$), while the slack given by $c$ allows to terminate the search much before reaching numerical precision.
\mh{Is really anybody using ``exact'' line search to a high precision as a sub-routine for multi-dimensional optimization?
This feels to me so dumb that nobody would even think about doing this, \emph{unless} gradient evaluations are much more expensive than 1d function/derivative computations. Shouldn't you stress the constant-factor gains of \gradalg{}/\nogradalg{} over bisection/secant?}

We recommend using \nogradalg{} for quasi-exact line search with for example $c=1$ to balance practical speed and theoretical guarantees.
\mh{Is it really a balance between theory and practice? If the total number of function calls are considered,
then theory \emph{and} practice require a moderate $c$, or not?}

Our algorithms compare favorably with bisection search, golden-section search and backtracking line search in their respective domains.

\bibliographystyle{unsrtnat}
\bibliography{biblio}

\begin{thebibliography}{5}
\providecommand{\natexlab}[1]{#1}
\providecommand{\url}[1]{\texttt{#1}}
\expandafter\ifx\csname urlstyle\endcsname\relax
  \providecommand{\doi}[1]{doi: #1}\else
  \providecommand{\doi}{doi: \begingroup \urlstyle{rm}\Url}\fi

\bibitem[Boyd and Vandenberghe(2004)]{boyd2004convex}
Stephen Boyd and Lieven Vandenberghe.
\newblock \emph{Convex Optimization}.
\newblock Cambridge University Press, Cambridge, England, 2004.

\bibitem[Frank and Wolfe(1956)]{frank1956quadratic}
Marguerite Frank and Philip Wolfe.
\newblock An algorithm for quadratic programming.
\newblock \emph{Naval Research Logistics Quarterly}, 3\penalty0 (1-2):\penalty0
  95--110, 1956.

\bibitem[Armijo(1966)]{armijo1966minimization}
Larry Armijo.
\newblock {Minimization of functions having Lipschitz continuous first partial
  derivatives.}
\newblock \emph{Pacific Journal of Mathematics}, 16\penalty0 (1):\penalty0 1 --
  3, 1966.

\bibitem[Truong and Nguyen(2021)]{truong2021backtracking}
Tuyen Truong and Hang-Tuan Nguyen.
\newblock Backtracking gradient descent method and some applications in large
  scale optimisation. part 2: Algorithms and experiments.
\newblock \emph{Applied Mathematics \& Optimization}, 84:\penalty0 1--30, 12
  2021.

\bibitem[Jaggi(2013)]{jaggi2013duality}
Martin Jaggi.
\newblock Revisiting {Frank-Wolfe}: Projection-free sparse convex optimization.
\newblock In \emph{Proceedings of the 30th International Conference on Machine
  Learning}, volume 28(1) of \emph{Proceedings of Machine Learning Research},
  pages 427--435, Atlanta, Georgia, USA, 17--19 Jun 2013. PMLR.

\end{thebibliography}

\begin{appendix}
\section*{Appendix}

\section{A bad case for tangent intersection selection}

It is tempting to consider a variant of \cref{alg:gradient_line_search} where at each iteration we query at the intersection of the tangents of the two external points,
that is,
$x^q_t = \argmin_{(x,y)\in\Delta_t} y$.
This algorithm can be very slow, however.

Consider for example the function
$f(x)=\max\{-x/100, e^{bx}\}$ 
for $b > 0$, with $x^0_1=-100, x^1_1=100$.
It can be verified
that, for this algorithm,
for approximately $100/b$ steps $t$,
$x^0_t=x^0_1$ while $x^1_t \approx 100 - t/b$.
That is, only linear progress is made on the $x$ axis.

By constrast, \nogradalg{} takes less than 15 iterations for $b=10$ for example.

\section{Numerical stability}\label{sec:stability}

\subsection{Line-line intersection}
Some numerical stability care needs to be taken when implementing the line-line intersection function from 4 points to calculate the $y$-gap.

Empirically, we found that the following formula is more stable than the determinant method.
For 4 points $p^1, p^2, p^3, p^4$, 
with $y^1 \geq y^2$ and $y^3 \leq y^4$,
define 
$a^{12} = \frac{y^2-y^1}{x^2 - x^1}$
and $a^{34} = \frac{y^3-y^4}{x^3 - x^4}$,
and 
$f^{12}(x) = a^{12}(x - x^2) +y^2$ 
and $f^{34}(x) = a^{34}(x - x^3) +y^3$.

Then, for \emph{any} $\hat x\in\Reals$,
the intersection of $f^{12}$ and $f^{34}$ is at 
\begin{align*}
    x^c = \hat x + \frac{f^{12}(\hat x) - f^{34}(\hat x)}{a^{34} - a^{12}}
\end{align*}
First we pick $\hat x=x^2$, then we pick $\hat x = x^c$ and repeat the latter one more time to obtain a more accurate value. 
Finally, we return the last $x^c$, and we take $y^c = \min\{f^{12}(x^c), f^{34}(x^c)\}$
conservatively (for function minimization).
The quantity $|f^{12}(x^c) - f^{34}(x^c)|$ is the approximation error.

Additionally, (possibly numerical) zeros and infinities must be taken care of in a conservative manner (always underestimating $y^c$).

\subsection{Initial gradient for quasi-exact line search}
When using an initial gradient for quasi-exact line search, it is important to keep the line constraint
as a function $\alpha \mapsto f(x) - \alpha \|\nabla f(x)\|^2$ as is done in backtracking line search, rather than the more tempting solution of adding virtual points and letting the 5-point algorithm recover the tangent.

\clearpage
\section{Table of notation}
\begin{tabular}{c|l}
    $f$ & a convex function to minimize on $[\xleft, \xright]$ \\
    $f'$ & derivative of $f$ \\
    $p^a_t=(x^a_t, y^a_t)$ & a point labelled $a$ at iteration $t$\\
    $f^a_t(x)$ & tangent of $f$ at $x^a_t$ \\
    $f^{ab}_t(x)$ & line going through $p^{a}_t$ and $p^b_t$ \\
    $[\xleft, \xright]$ & initial interval in which to start the search \\
    $\yleft, \yright$ & $f(\xleft), f(\xright)$ \\
    $\xlow, \ylow$ & \XXX \\
    $\xhigh, \yhigh$ & \XXX \\
    $\ytol$ & User-specified tolerance used to terminate the line search \\
    $[x^0_t, x^1_t]$ & interval maintained by the gradient-based algorithms \\
    $x^q_t$ & query coordinate of an algorithm at iteration $t$ \\
    $\Delta_t$ & optimality region for \gradalg{} at step $t$; contains $(x^*, f(x^*))$\\
    $\Delta^x_t, \Delta^y_t$ & $x$ and $y$ coordinates of $\Delta_t$ \\
    $P_t$ & $(f,\xleft, \xright)$-convex set of points maintained by the \nogradalg{} algorithm\\
    $\Delta(P)$ & optimality region for a $(f,\cdot,\cdot)$ set of points $P$ \\
    $\Delta^x(P_t), \Delta^y(P_t)$ & Sets of $x$ and $y$ coordinates of $\Delta(P_t)$ \\
    $x^-, x^+$ & $\inf \Delta(P), \sup \Delta(P)$ \\
    $p^{ij,kl}=(x^{ij,kl}, y^{ij,kl})$ & Intersection point of $f^{ij}$ and $f^{kl}$ \\
\end{tabular}

\end{appendix}

\end{document}